\newtheorem{thm}{Theorem}[section]
\newtheorem{cor}[thm]{Corollary}
\newtheorem{lem}[thm]{Lemma}
\newtheorem{prop}[thm]{Proposition}
\newtheorem{exam}[thm]{Example}
\theoremstyle{remark}
\newtheorem{rem}{Remark}[section]
\newcommand{\tr}{{\mathsf{T}}}
\def\f{\frac}
 \def\a{{\alpha}}
 \def\b{{\beta}}
 \def\l{{\lambda}}
 \def\d{{\delta}}
 \def\la{{\langle}}
 \def\ra{{\rangle}}
 \def\eb{\mathbf{e}}
 \def\Ab{\mathbf{A}}
 \def\Bb{\mathbf{B}}
 \def\Cb{\mathbf{C}}
 \def\Db{\mathbf{D}}
 \def\Hb{\mathbf{H}}
 \def\Ib{\mathbf{I}}
 \def\CV{\mathcal{V}}
 \def\PP{\mathbb{P}}
 \def\QQ{\mathbb{Q}}
 \def\RR{\mathbb{R}}
 \def\SS{\mathbb{S}}
\def\c={\stackrel{{\scriptstyle{\mathrm{c}}}}{=}}
\newcommand{\wt}{\widetilde}
\newcommand{\wh}{\widehat}
\begin{document}

\title[Sobolev orthogonal polynomials on product domains]
{Sobolev orthogonal polynomials on product domains}

\author[L. Fern\'{a}ndez]{Lidia Fern\'{a}ndez}
\address[L. Fern\'{a}ndez]{Departamento de Matem\'atica Aplicada,
Universidad de Granada, Spain}

\author[F. Marcell\'{a}n]{Francisco Marcell\'{a}n}
\address[F. Marcell\'{a}n]{Instituto de Ciencias Matem\'aticas (ICMAT) and Departamento de Matem\'aticas,
Universidad Carlos III de Madrid, Spain}

\author[T. E. P\'erez]{Teresa E. P\'erez}
\address[T. E. P\'erez]{Departamento de Matem\'atica Aplicada,
Universidad de Granada, Spain}

\author[M. A. Pi\~{n}ar]{Miguel A. Pi\~{n}ar}
\address[M. A. Pi\~{n}ar]{Departamento de Matem\'atica Aplicada,
Universidad de Granada, Spain}

\author[Y. Xu]{Yuan Xu}
\address[Y. Xu]{Department of Mathematics,
Universty of Oregon, USA}

\thanks{The work of the first, third and fourth author has been partially supported by DGICYT, Ministerio de Econom\'ia y Competitividad (MINECO) of Spain grant MTM 2011--28952--C02--02. The work of the second author has been supported by DGICYT, Ministerio de Econom\'ia y Competitividad (MINECO) of Spain grant MTM2012-36732-C03-01. The work of the fifth author was supported in part by NSF Grant DMS-1106113}

\date{\today}

\keywords{classical orthogonal polynomials, orthogonal polynomials in two variables, Sobolev inner products, product domain}
\subjclass[2000]{33C50, 42C10}

\begin{abstract}
Orthogonal polynomials on the product domain $[a_1,b_1] \times [a_2,b_2]$ with respect to the inner product
$$
 \la f ,g \ra_S =  \int_{a_1}^{b_1} \int_{a_2}^{b_2} \nabla f(x,y)\cdot \nabla g(x,y)\, w_1(x)w_2(y)
 \,dx\, dy + \l f(c_1,c_2)g(c_1,c_2)
$$
are constructed, where  $w_i$ is a weight function on $[a_i,b_i]$ for $i = 1, 2$, $\l > 0$, and $(c_1, c_2)$
is a fixed point. The main result shows how an orthogonal basis for
such an inner product can be constructed for certain weight functions, in particular, for product Laguerre
and product Gegenbauer weight functions, which serve as primary examples.
\end{abstract}

\maketitle

\section{Introduction}
\setcounter{equation}{0}

Let $w_i(x)$ be a nonnegative weight function defined on an interval $[a_i,b_i]$, where $i =1,2$. Let $W$ be
the product weight function
\begin{equation} \label{eq:W}
   W(x,y):= w_1(x) w_2(y), \qquad  (x,y) \in \Omega : = [a_1,b_1] \times [a_2,b_2].
\end{equation}
The purpose of this paper is to study orthogonal polynomials with respect to the inner product
\begin{equation} \label{eq:ipd-1}
   \la f ,g \ra_S =  \iint\limits_{\Omega} \nabla f(x,y)\cdot \nabla g(x,y)\, W(x,y) \,dx\, dy + \l f(c_1,c_2)g(c_1,c_2),
\end{equation}
where $\l > 0$ and $(c_1,c_2)$ is a fixed point, typically a corner point of  the product domain $\Omega$.

Sobolev orthogonal polynomials of one variable have been extensively studied (see the survey \cite{MX}). 
In particular, polynomials that are orthogonal with respect to the one--variable analogue of the inner product
\eqref{eq:W} were analyzed in \cite{KLJ97}. In
contrast, the study of such polynomials in several variables is a fairly recent affair. In \cite{X08}, one of the earliest
studies in several variables, Sobolev orthogonal polynomials with respect to an inner product similar to \eqref{eq:ipd-1} on
the unit ball of $\RR^d$ are constructed, where the discrete part could also be replaced by the integral on the boundary
of the ball. The
motivation of \cite{X08} came from a question from engineering that requires control over the gradient. Such
inner products appear naturally in the analysis of spectral methods for numerical solutions of partial differential equations
(cf. \cite{LX}), which motivates our study.

For the ordinary inner product on the product domain,
\begin{equation} \label{eq:ipd-2}
 \la f ,g \ra_W =  \iint\limits_{\Omega} f(x,y) g(x,y)\, W(x,y) \,dx\, dy,
\end{equation}
it is immediate that a basis of orthogonal polynomials of degree $n$ is given by
$p_k(w_1;x)p_{n-k}(w_2;y)$, $0\leqslant k \leqslant n$, where $p_k(w;x)$ denotes the orthogonal polynomial of degree $k$ with
respect to $w$.  A moment reflection shows, however, that Sobolev orthogonal polynomials with respect to the
inner product \eqref{eq:ipd-1} do not possess product structure. Our goal in this paper is to study  the orthogonal
structure for the inner product \eqref{eq:ipd-1} on the product domain.

Our main result provides a way to construct a basis of Sobolev orthogonal polynomials, complemented with an
algorithm that computes both orthogonal polynomials and their $L^2$ norm, when both weight functions $w_1$
and $w_2$ are self-coherent, which means that their monic orthogonal polynomials satisfy the relations of the form
\begin{equation} \label{eq:cohen-poly}
  p_n (x)= \frac{p'_{n+1}(x)}{n+1} + a_n  p'_{n} (x) + b_n  p'_{n-1}(x), \qquad n \geqslant 1.
\end{equation}
Weight functions, or measures, that are self-coherent have been studied extensively and characterized. They are essentially the classical measures. In \cite{MBP} the authors proved that \eqref{eq:cohen-poly} characterizes classical orthogonal polynomials.

Our approach is to express the Sobolev orthogonal polynomials with respect to the inner product $\la \cdot,\cdot \ra_S$
in terms of a family of product polynomials, which are not, however, the product orthogonal polynomials with respect to \eqref{eq:ipd-2}, but product polynomials of the form $q_k(w_1;x) q_{n-k}(w_2;y)$, where $q_k(w)$ takes the form of
the right hand side of \eqref{eq:cohen-poly} without the derivative. In order to keep the idea transparent, we will not
work with the most general case that our method applies, but work primarily with two examples, product
Laguerre weight functions and product Gegenbauer weight functions, for which we work out our algorithms explicitly.

Some of our results can no doubly be extended from two variables to several variables. We choose to stay
with two variables to avoid complicated notation and keep the algorithm practical.

The paper is organized as follows. In the next section, we recall the basics for orthogonal polynomials of several
variables, and describe our strategy for constructing Sobolev orthogonal polynomials for the product
weight functions. The construction is worked out explicitly in the case of product Laguerre weight in
Section 3 and in the case of product Gegenbauer weight in Section 4.

\section{Constructing bases for Sobolev orthogonal polynomials}
\setcounter{equation}{0}

The basics of orthogonal polynomials in several variables are given in the first subsection. Sobolev
orthogonal polynomials for product measures are described in the second subsection, and the
strategy for constructing an orthogonal basis is discussed in the third subsection.

\subsection{Orthogonal polynomials of two variables}
Let $\Pi^2$ denote the space of polynomials in two real variables and, for $n = 0,1,2,\ldots$, let $\Pi_n^2$ denote the
subspace of polynomials of (total) degree at most $n$ in $\Pi^2$. For an inner product $\la \cdot, \cdot \ra$ defined
on $\Pi^2$, a polynomial $P \in \Pi_n^2$ is said to be orthogonal if $\la P, Q\ra =0$ for all $Q \in \Pi_{n-1}^2$.
Let $\CV_n^2$ denote the space of orthogonal polynomials of total degree $n$ with respect to $\la \cdot, \cdot \ra$.
It is known that
$$
  \dim \Pi_n^2 = \binom{n+2}{n} \quad \hbox{and} \quad \dim \CV_n^2 =  n+1.
$$

The space $\CV_n^2$ can have many different bases. A basis $\{P_k^n: 0\leqslant k\leqslant n\}$ of $\CV_n^2$
is called mutually orthogonal if $\la P_k^n, P_j^n\ra =0$ for $k \ne j$ and it is called orthonormal if, in addition,
$\la P_k^n, P_k^n\ra =1$. Another polynomial basis that is of interest is the monic basis, for which $P_k^n(x,y) = x^{n-k} y^k
+ R_k^n(x,y)$, where $R_k^n \in \Pi_{n-1}^2$, $0\leqslant k \leqslant n$. It is often convenient to use the vector notation
$$
     \PP_n = \big(P^n_{0}, P^n_{1}, \ldots,  P^n_{n} \big)^{\tr},
$$
considered as a column vector, which we also regard as a set of orthogonal polynomials of degree $n$. In this
notation, $\la \PP_n, \PP_m^\tr \ra =
\mathbf{H}_n \delta_{n,m}$, where $\mathbf{H}_n$ is a matrix of size $(n+1) \times (n+1)$, necessarily symmetric and positive definite. If the set $\PP_n$ contains a mutually orthogonal basis then $\mathbf{H}_n$
is diagonal, and if it is orthonormal then $\mathbf{H}_n$ is the identity matrix.

For $W(x,y) = w_1(x) w_2(y)$ as in \eqref{eq:W},  we consider the inner product
$$
   \la f, g\ra_W = c \int_\Omega f(x,y) g(x,y) W(x,y) dxdy,
$$
where $c$ is a normalization constant of $W$ so that $\la 1, 1 \ra_W =1$. A basis of $\CV_n^2 (W)$ is given by
the product polynomials
\begin{equation} \label{eq:productOP}
  P_k^n(x,y) := p_{n-k}(w_1;x)p_k(w_2;y), \qquad 0 \leqslant k \leqslant n,
\end{equation}
where $p_k(w_i;x) = x^k+ \ldots$ denotes the monic orthogonal polynomial with respect to $w_i$ on $[a_i,b_i]$.
Then $P_k^n$ is the monic orthogonal polynomial and $\{P_k^n: 0\leqslant k \leqslant n\}$ forms a mutually orthogonal basis of
$\CV_n^2(W)$.

\subsection{Sobolev orthogonal polynomials}

For $i =1,2$, let $w_i$ be a weight function defined on the interval $[a_i,b_i]$, where $-a_i$ and $b_i$ can be
infinity. For the product weight function $W$ in \eqref{eq:W}, let $\CV_n^2(S)$ denote the space of Sobolev
orthogonal polynomials of degree $n$ with respect to the inner product $\la \cdot,\cdot \ra_S$ defined in
\eqref{eq:ipd-1}. Most of our work will be carried out for the following two examples.

\begin{exam} \label{ex:Laguerre}
For $\a > -1$, let $w_\a$ be the Laguerre weight function
$$
     w_\a(x):= x^\a e^{-x},  \qquad x \in \RR_+:= [0,\infty).
$$
For $\a,\b > -1$, let $W_{\a,\b}$ be the product Laguerre weight function defined by
$$
   W_{\a,\b} (x,y):= w_\a(x)w_\b(y), \qquad (x,y) \in \Omega: = \RR_+^2.
$$
There is only one finite corner point of $\Omega$, and we consider the inner product
\begin{equation} \label{eq:Laguerre}
   \la f, g \ra_{S}=  c_{\a,\b} \int_{\RR_+^2} \nabla f(x,y)\cdot \nabla g(x,y)\, W_{\a,\b}(x,y) \,dx\, dy + \l f(0,0)g(0,0),
\end{equation}
where $\l > 0$ is a fixed constant and $c_{\a,\b}= 1/ \int_{\RR_+^2}  W_{\a,\b}(x,y) \,dx\, dy $.
\end{exam}

\begin{exam} \label{ex:Gegen}
For $\a > -1/2$, let $u_\a$ be the Gegenbauer weight function
$$
     u_\a(x):= (1-x^2)^{\a - \f12},  \qquad x \in [-1,1].
$$
For $\a,\b > -1/2$, let $U_{\a,\b}$ be the product Gegenbauer weight function defined by
$$
   U_{\a,\b} (x,y):= u_\a(x)u_\b(y), \qquad (x,y) \in \Omega: = [-1,1]^2.
$$
There are four corner points of $\Omega$ and we consider the inner product
\begin{align} \label{eq:Gegenbauer}
   \la f, g \ra_{S}= c_{\a,\b} \int_{-1}^1 \int_{-1}^1 \nabla f(x,y)\cdot \nabla g(x,y)\, U_{\a,\b}(x,y) \,dx\, dy + \l f(1,1)g(1,1),
\end{align}
where $\l > 0$ is a fixed constant and  $c_{\a,\b}= 1/ \int_{\Omega}  U_{\a,\b}(x,y) \,dx\, dy $.
\end{exam}

For the inner product $\la \cdot,\cdot \ra_S$ in \eqref{eq:ipd-1}, we denote its main part by
\begin{align}\label{eq:ipd-nabla}
  \la f,g \ra_\nabla : = & c \int_\Omega \nabla f(x,y) \cdot \nabla g(x,y) W(x,y) dxdy \\
      = & \la \partial_1f , \partial_1 g \ra_W +  \la \partial_2 f , \partial_2 g \ra_W.  \notag
\end{align}
This is a bilinear form and it is an inner product on the linear space $\Pi^2 \backslash \RR$ of polynomials
having a zero constant term. Let
$$
    \CV_n^2(S):= \CV_n^2(S,W) \quad \hbox{and} \quad \CV_n^2(\nabla):= \CV_n^2(\nabla,W)
$$
denote the linear spaces of orthogonal polynomials of total degree $n$ associated with $\la \cdot,\cdot \ra_S$
and $\la \cdot,\cdot \ra_\nabla$, respectively.

Let $\mathsf{S}_k^n$ be the monic orthogonal polynomial of degree $n$ in $\CV_n^2(S)$ that satisfies
$\mathsf{S}_k^n(x,y) - x^{n-k}y^k \in \Pi_{n-1}^2$ for $0 \leqslant k \leqslant n$. Likewise, for $n \geqslant 1$,
let $S_k^n$ be a monic orthogonal polynomial in $\CV_n^2(\nabla)$.

\begin{thm}\label{sobolev-basis}
For $n\geqslant 1$, let $\{S_k^n: 0\leqslant k \leqslant n\}$ denote a monic orthogonal basis of
$\CV_n^2(\nabla)$. Then, the monic orthogonal basis $\{\mathsf{S}_k^n: 0 \leqslant k \leqslant n\}$ of $\CV_n^2(S)$
is given by $\mathsf{S}_0^0(x,y) =1$ and
$$
    \mathsf{S}_k^n(x,y) = S_k^n(x,y) -   S_k^n(c_1,c_2), \quad n\geqslant 1.
$$
\end{thm}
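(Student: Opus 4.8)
The plan is to verify directly that the polynomials $\widetilde{S}_k^n := S_k^n - S_k^n(c_1,c_2)$ are orthogonal with respect to $\la \cdot,\cdot\ra_S$, and then to conclude by uniqueness of the monic orthogonal basis. Before doing so, I would record that $\la\cdot,\cdot\ra_S$ is genuinely positive definite on all of $\Pi^2$: indeed, combining \eqref{eq:ipd-1} and \eqref{eq:ipd-nabla} gives $\la f,f\ra_S = \la f,f\ra_\nabla + \l\, f(c_1,c_2)^2$, which vanishes only when $\nabla f = 0$ on $\Omega$ (forcing $f$ to be a constant, since a polynomial with vanishing gradient on an open set is constant) and $f(c_1,c_2) = 0$ (forcing that constant to be zero, as $\l > 0$). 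Positive definiteness guarantees that for each $n$ and $k$ the monic orthogonal polynomial $\mathsf{S}_k^n$ with leading term $x^{n-k}y^k$ exists and is unique, so it suffices to show that $\widetilde{S}_k^n$ has the defining properties.

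First I would check that the candidate has the correct shape. Since $S_k^n$ is monic with $S_k^n - x^{n-k}y^k \in \Pi_{n-1}^2$ and we subtract only the constant $S_k^n(c_1,c_2)$, the polynomial $\widetilde{S}_k^n$ is again monic with $\widetilde{S}_k^n - x^{n-k}y^k \in \Pi_{n-1}^2$. The crucial feature is that, by construction, $\widetilde{S}_k^n(c_1,c_2) = 0$.

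The heart of the argument is the orthogonality computation. Fix $n \geq 1$ and any $Q \in \Pi_{n-1}^2$, and split the inner product as
$$
  \la \widetilde{S}_k^n, Q\ra_S = \la \widetilde{S}_k^n, Q\ra_\nabla + \l\, \widetilde{S}_k^n(c_1,c_2)\, Q(c_1,c_2).
$$
The discrete term vanishes because $\widetilde{S}_k^n(c_1,c_2) = 0$. For the gradient term, since $\widetilde{S}_k^n$ and $S_k^n$ differ by a constant they have the same gradient, so $\la \widetilde{S}_k^n, Q\ra_\nabla = \la S_k^n, Q\ra_\nabla$; and this is zero because $S_k^n \in \CV_n^2(\nabla)$ is orthogonal to $\Pi_{n-1}^2$ in the form $\la\cdot,\cdot\ra_\nabla$ (note that constants contribute nothing to this form, so orthogonality against all of $\Pi_{n-1}^2$, including the constants present when $n \geq 1$, is automatic). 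Hence $\la \widetilde{S}_k^n, Q\ra_S = 0$ for every $Q \in \Pi_{n-1}^2$, and by the uniqueness noted above $\widetilde{S}_k^n = \mathsf{S}_k^n$, which is the claimed formula.

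I do not expect a genuine obstacle here; the one point requiring care is the degeneracy of $\la\cdot,\cdot\ra_\nabla$ on constants. This degeneracy is exactly what makes $S_k^n$ determined only up to an additive constant, but it is harmless: the expression $S_k^n - S_k^n(c_1,c_2)$ is manifestly invariant under $S_k^n \mapsto S_k^n + (\text{constant})$, so it yields the same well-defined $\mathsf{S}_k^n$ regardless of which representative $S_k^n$ is chosen. The very same degeneracy is the mechanism behind the result: the subtracted constant passes through the gradient untouched while simultaneously cancelling the point-evaluation term.
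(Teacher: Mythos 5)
Your proof is correct and follows essentially the same route as the paper's own (one-line) argument: subtracting the constant $S_k^n(c_1,c_2)$ kills the point-evaluation term, and since constants are invisible to $\la\cdot,\cdot\ra_\nabla$, the Sobolev orthogonality reduces to the orthogonality of $S_k^n$ in $\CV_n^2(\nabla)$ --- your additions (positive definiteness of $\la\cdot,\cdot\ra_S$, uniqueness of the monic orthogonal polynomial, and the well-definedness remark) are just carefully supplying details the paper leaves implicit. The only point you leave unaddressed is mutual orthogonality \emph{within} degree $n$, i.e.\ $\la \mathsf{S}_k^n,\mathsf{S}_j^n\ra_S=0$ for $j\ne k$, which the paper's identity $\la \mathsf{S}_k^n, \mathsf{S}_j^m \ra_S = \la \mathsf{S}_k^n, S_j^m \ra_\nabla$ covers for all pairs $(j,m)$; this follows at once from your same computation applied with $Q=\widetilde{S}_j^n$, using that the $S_k^n$ are mutually orthogonal for $\la\cdot,\cdot\ra_\nabla$.
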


\begin{proof}
Since $\mathsf{S}_k^n(c_1,c_2) =0$, it follows that $\la \mathsf{S}_k^n, \mathsf{S}_j^m \ra_S = \la \mathsf{S}_k^n, S_j^m \ra_\nabla$ if $n \geqslant 1$.
\end{proof}

This theorem shows that we only need to work with the bilinear form $\la \cdot, \cdot \ra_\nabla$ and on the linear space $\Pi^2 \backslash \RR$. Observe that the orthogonal polynomials in $\CV_n^2(\nabla)$ are determined up to an additive constant
$c$. Indeed, for any constant $c$, the polynomial $S_k^n + c$ is also a monic orthogonal polynomial in $\CV_n^2(\nabla)$.
By Theorem \ref{sobolev-basis}, however, we only need to determine $S_k^n$ up to a constant. For convenience, we adopt
the following notation for two functions that are equal up to a constant:
$$
   f(x,y) \c= g(x,y) \qquad \hbox{if} \quad   f(x,y) - g(x,y) \equiv c,
$$
where $c \in \RR$ is a generic constant.

\subsection{Strategy for constructing Sobolev orthogonal polynomials}
In order to construct the polynomial $S_k^n$, we expand it in terms of a known basis of polynomials denoted by
$\{Q_j^m: 0\leqslant j \leqslant m \leqslant n\}$,
\begin{equation} \label{eq:Skn=Qjm}
  S_k^n (x,y) = \sum_{m=0}^n \sum_{j=0}^m a_{j,m}(k) Q_j^m (x,y),
\end{equation}
and determine the coefficients $a_{j,m}(k)$ by orthogonality. Since $S_k^n$ is determined up to a constant, the equal sign should be replaced by $\c=$ in \eqref{eq:Skn=Qjm}.

The choice of $Q_j^m$ clearly matters. An
obvious choice is the basis of product orthogonal polynomials $P_k^n$ in \eqref{eq:productOP}. This basis,
however, is not a good choice since we need to work with derivatives of the basis elements. This is where the
notion of coherent pair comes in.

A weight function $w$ defined on the real line is called self-coherent if its monic orthogonal polynomials $p_n(w)$
satisfy the relation
\begin{equation}\label{eq:coherent}
   p_n(w;x) = \frac{p_{n+1}'(w;x)}{n+1} + a_n p_{n}'(w;x), \qquad n \geqslant 0,
\end{equation}
for some constants $a_n$. Furthermore, $w$ is called symmetric self-coherent, if $w$ is an even function and
its monic orthogonal polynomials $p_n(w)$ satisfy the relation
\begin{equation}\label{eq:symm-coherent}
   p_n(w;x) = \frac{p_{n+1}'(w;x)}{n+1} + b_n p_{n-1}'(w;x), \qquad n \geqslant 1.
\end{equation}
More generally, we can call $w$ self-coherent if it satisfies \eqref{eq:cohen-poly}, that is,
$$
 p_n(w;x) = \frac{p_{n+1}'(w;x)}{n+1} +  a_n p_{n}'(w;x)+ b_n p_{n-1}'(w;x), \qquad n \geqslant 1
$$

If $w$ is self-coherent, we denote by $q_n(w)$ the polynomial of degree $n$ defined by
\begin{equation}\label{eq:q_n}
   q_{n} (w;x) = p_{n}(w;x) + n a_{n-1} p_{n-1}(w;x) + n b_{n-1} p_{n-2}(w;x),
    \quad n \geqslant 1,
\end{equation}
where, by convention, $p_{-1}(w;x)=0$ and we assume the last term is zero if $n=1$. It follows
directly from the definition that $q_n(w)$ is monic and
$$
     q_n'(w;x) =  n p_{n-1}(w;x).
$$
Notice that self-coherent orthogonal polynomials are essentially, up to a linear change of variable, the classical orthogonal polynomials (Jacobi, Laguerre and Hermite) as was proved in \cite{MBP}.

We now define the polynomials $Q_j^m$ of two variables by
\begin{equation}\label{eq:Q_jm}
    Q_k^n(x,y): = q_{n-k}(w_1;x) q_k(w_2;y), \qquad 0 \leqslant k \leqslant n, \quad n =0,1,\ldots.
\end{equation}
The derivatives of $Q_k^n$ can be given explicitly in terms of product orthogonal polynomials $P_j^m$
in \eqref{eq:productOP}.

\begin{lem} \label{lem:Qkn}
Let $\partial_i$ denote the $i$-th partial derivative. Then
\begin{align*}
  & \partial_1 Q_0^n(x,y) = n p_{n-1}(w_1;x) = n P_0^{n-1}(x,y) \quad \hbox{and} \quad \partial_2 Q_0^n (x,y) =0, \\
  & \partial_1 Q_n^n(x,y) =0 \quad \hbox{and} \quad \partial_2 Q_0^n (x,y) = n p_{n-1}(w_2;y) = n P_{n-1}^{n-1}(x,y).
\end{align*}
Furthermore, for $1 \leqslant k \leqslant n-1$,
\begin{align*}
  \partial_1 Q_k^n &  = (n-k) \left( P_k^{n-1} +k a_{k-1}(w_2) P_{k-1}^{n-2} + k b_{k-1}(w_2) P_{k-2}^{n-3}\right),  \\
  \partial_2 Q_k^n & = k \left(P_{k-1}^{n-1} + (n-k) a_{n-k-1}(w_1) P_{k-1}^{n-2} +(n-k) b_{n-k-1}(w_1) P_{k-1}^{n-3}\right).
\end{align*}
\end{lem}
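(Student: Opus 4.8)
The plan is to compute both partial derivatives directly from the product structure $Q_k^n(x,y) = q_{n-k}(w_1;x)\,q_k(w_2;y)$, using the two facts recorded just after \eqref{eq:q_n}: the differentiation identity $q_m'(w;\cdot) = m\,p_{m-1}(w;\cdot)$ and the expansion \eqref{eq:q_n} of $q_m$ in the monic orthogonal polynomials $p_j$. Because $Q_k^n$ is a product of a function of $x$ alone with a function of $y$ alone, each partial derivative acts on a single factor and leaves the other intact, so no genuine two-variable manipulation is needed.

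First I would handle the boundary cases $k=0$ and $k=n$, where one factor is $q_0 = p_0 \equiv 1$. Then $Q_0^n(x,y) = q_n(w_1;x)$ depends on $x$ only and $Q_n^n(x,y) = q_n(w_2;y)$ on $y$ only, so the stated vanishing partials are immediate; the nonzero ones follow from $q_n'(w_i;\cdot) = n\,p_{n-1}(w_i;\cdot)$, together with the identifications $p_{n-1}(w_1;x) = P_0^{n-1}(x,y)$ and $p_{n-1}(w_2;y) = P_{n-1}^{n-1}(x,y)$ read off from \eqref{eq:productOP}.

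For the generic range $1 \leqslant k \leqslant n-1$ I would treat the two derivatives symmetrically. Differentiating the $x$-factor gives $\partial_1 Q_k^n = (n-k)\,p_{n-k-1}(w_1;x)\,q_k(w_2;y)$; substituting the expansion \eqref{eq:q_n} for $q_k(w_2;y)$ and distributing produces three products $p_{n-k-1}(w_1;x)\,p_j(w_2;y)$ for $j = k, k-1, k-2$. The computation for $\partial_2$ is the mirror image: differentiate the $y$-factor to obtain $k\,p_{k-1}(w_2;y)\,q_{n-k}(w_1;x)$ and expand $q_{n-k}(w_1;x)$ by \eqref{eq:q_n}, yielding products $p_s(w_1;x)\,p_{k-1}(w_2;y)$ for $s = n-k, n-k-1, n-k-2$.

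The one point requiring attention is the index bookkeeping when matching each product against the convention $P_j^m(x,y) = p_{m-j}(w_1;x)\,p_j(w_2;y)$, which forces $j$ to equal the degree of the $y$-factor and $m$ the total degree. For $\partial_1$ this sends the three terms to $P_k^{n-1}$, $P_{k-1}^{n-2}$, $P_{k-2}^{n-3}$, and for $\partial_2$ to $P_{k-1}^{n-1}$, $P_{k-1}^{n-2}$, $P_{k-1}^{n-3}$, with the coefficients $a_{k-1}(w_2), b_{k-1}(w_2)$ and $a_{n-k-1}(w_1), b_{n-k-1}(w_1)$ carried along from \eqref{eq:q_n}. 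No real obstacle arises, since both the differentiation identity and the expansion of $q_m$ are exact equalities; the lemma is simply the result of collecting terms.
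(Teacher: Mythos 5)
Your proposal is correct and follows essentially the same route as the paper's proof: differentiate the product factor by factor, use $q_m'(w;\cdot)=m\,p_{m-1}(w;\cdot)$, substitute the expansion \eqref{eq:q_n} for the undifferentiated factor, and match each resulting product against the convention $P_j^m(x,y)=p_{m-j}(w_1;x)\,p_j(w_2;y)$; your index bookkeeping agrees with the stated formulas. You also handle the boundary cases $k=0$ and $k=n$ explicitly (correctly reading the second displayed line as a statement about $\partial_2 Q_n^n$, where the paper's statement has an evident typo), which the paper's terser proof leaves as ``proved similarly.''
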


\begin{proof}
For $1 \leqslant k \leqslant n$, it follows directly from the definition of $Q_k^n $ that
$$
 \partial_1 Q_k^n(x,y) = q_{n-k}'(w_1;x) q_k(w_2;y) = (n-k) p_{n-k-1}(w_1;x) q_k(w_2;y).
$$
Substituting $q_k(w_2;y)$  by its definition \eqref{eq:q_n}, the identity for $\partial_1 Q_k^n$ follows from
the definition of $P_j^m$.  The other identities are proved similarly.
\end{proof}

Let $\QQ_n = (Q_0^n, \ldots, Q_n^n)^\tr$ and $\SS_n = (S_0^n, \ldots, S_n^n)^\tr$ denote the column
vector of polynomials $Q_k^n$ and $S_k^n$, respectively. Furthermore, let $e_i$ denote the standard Euclidean
coordinate vector whose $i$-th element is 1 and all other elements are 0.

\begin{thm} \label{thm:Q=S}
For $0 \leqslant k \leqslant n$, there exist real numbers $a_{i,k}$ and $b_{i,k}$ such that
\begin{equation} \label{eq:Qkn=Skn}
  Q_k^n(x,y) \c= S_k^n(x,y) + \sum_{i=0}^{n-1} a_{i,k} S_i^{n-1}(x,y) +  \sum_{i=0}^n b_{i,k} S_i^{n-2}(x,y).
\end{equation}
Moreover, in the case of $k =0$ and $k=n$, we have, respectively,
\begin{equation} \label{eq:Q0n=S0n}
  S_0^n (x,y) \c= Q_0^n(x,y) \quad \hbox{and} \quad  S_n^n (x,y) \c= Q_n^n(x,y).
\end{equation}
In terms of vector notation, \eqref{eq:Qkn=Skn} can be written as
\begin{equation} \label{eq:Qn=Sn}
  \QQ_n \c= \SS_n + \Ab_{n-1} \SS_{n-1} + \Bb_{n-2} \SS_{n-2},
\end{equation}
where $\Ab_{n-1}$ and $\Bb_{n-2}$ are matrices of the form
$$
\Ab_{n-1}=\left[ \begin{array}{ccc}
0 & \dots & 0 \\ \hline  & & \\
& \wt{\Ab}_{n-1} & \\
 & & \\ \hline 0 & \dots & 0
\end{array}
\right] \quad \hbox{and} \quad \Bb_{n-2}=\left[ \begin{array}{ccc}
0 & \dots & 0 \\ \hline  & & \\
& \wt{\Bb}_{n-2} & \\
 & & \\ \hline 0 & \dots & 0
\end{array}
\right].
$$
Here $\wt \Ab_{n-1}$ and $\wt \Bb_{n-2}$ are matrices of size $(n-1)\times n$ and $(n-1) \times (n-1),$ respectively.
\end{thm}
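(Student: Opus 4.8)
The plan is to expand $Q_k^n$ against the full family $\{S_i^m : 0\le i\le m,\ 1\le m\le n\}$, which is a basis of $\Pi_n^2$ modulo constants, and then to show that only the blocks of degrees $n$, $n-1$, $n-2$ survive. Writing $Q_k^n \c= \sum_{m=1}^n\sum_{i=0}^m c_{i,m}S_i^m$, I would first note that the degree blocks decouple: since each $S_i^m\in\CV_m^2(\nabla)$ is $\la\cdot,\cdot\ra_\nabla$-orthogonal to every polynomial of degree $<m$, testing $\la Q_k^n, S_j^m\ra_\nabla$ against a fixed degree $m$ picks out exactly the block $\sum_i c_{i,m}\la S_i^m,S_j^m\ra_\nabla$. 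The within-block Gram matrix $[\la S_i^m,S_j^m\ra_\nabla]_{i,j}$ is positive definite, because $\la\cdot,\cdot\ra_\nabla$ is an inner product on $\Pi^2\backslash\RR$ and the $S_i^m$ are independent; hence a whole block vanishes, $c_{i,m}=0$ for all $i$, precisely when $Q_k^n$ is $\la\cdot,\cdot\ra_\nabla$-orthogonal to $\CV_m^2(\nabla)$. So the theorem reduces to (a) identifying the top block and (b) proving $Q_k^n\perp_\nabla\CV_m^2(\nabla)$ for every $m\le n-3$.

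For (a), I would compare homogeneous leading parts. Since $q_{n-k}(w_1)$ and $q_k(w_2)$ are monic, the degree-$n$ part of $Q_k^n$ is $x^{n-k}y^k$, while the degree-$n$ part of $S_i^n$ is $x^{n-i}y^i$; matching these forces $c_{i,n}=\delta_{i,k}$, i.e. the coefficient of $S_k^n$ is $1$ and the rest of the top block is zero.

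Step (b) is the heart of the argument and where I expect the real work to lie. Here I would invoke Lemma~\ref{lem:Qkn}: for $1\le k\le n-1$ both $\partial_1 Q_k^n$ and $\partial_2 Q_k^n$ are linear combinations of the product orthogonal polynomials $P_j^l$ with $l\in\{n-1,n-2,n-3\}$ only. Given $P\in\CV_m^2(\nabla)$ with $m\le n-3$, each $\partial_i P$ lies in $\Pi_{m-1}^2\subseteq\Pi_{n-4}^2$, whereas every $P_j^l$ above has degree $l\ge n-3$ and is therefore $\la\cdot,\cdot\ra_W$-orthogonal to all of $\Pi_{l-1}^2\supseteq\Pi_{n-4}^2$. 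Consequently $\la\partial_i Q_k^n,\partial_i P\ra_W=0$ for $i=1,2$, and summing the two terms of \eqref{eq:ipd-nabla} gives $\la Q_k^n,P\ra_\nabla=0$. Thus the only subtlety is the degree bookkeeping: it is exactly the self-coherence of $w_1,w_2$ (the three-term form of $q_n$, giving $q_n'=np_{n-1}$ and confining $q_k$ to three consecutive $p$'s) that caps the spread of derivative degrees at two below $n$, so nothing of degree $\le n-3$ can couple.

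Finally I would dispatch the corner cases and assemble the matrix form. For $k=0$ one has $\partial_2 Q_0^n=0$ and $\partial_1 Q_0^n = nP_0^{n-1}$, a single product polynomial of degree $n-1$; repeating the orthogonality estimate now annihilates every block of degree $m\le n-1$, leaving $Q_0^n\c=S_0^n$, and symmetrically $Q_n^n\c=S_n^n$, which is \eqref{eq:Q0n=S0n}. Combining (a), (b), and the corner cases yields \eqref{eq:Qkn=Skn}, and collecting the coefficients into matrices gives \eqref{eq:Qn=Sn}; the vanishing first and last rows of $\Ab_{n-1}$ and $\Bb_{n-2}$ record the corner identities, while the surviving rows $k=1,\dots,n-1$ together with the index ranges $0\le i\le n-1$ and $0\le i\le n-2$ of the lower-degree blocks produce the stated sizes $(n-1)\times n$ and $(n-1)\times(n-1)$ of $\wt\Ab_{n-1}$ and $\wt\Bb_{n-2}$.
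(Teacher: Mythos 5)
Your proof is correct and follows essentially the same route as the paper: both rest on Lemma~\ref{lem:Qkn} to confine the derivatives of $Q_k^n$ to product orthogonal polynomials of degrees $n-1$, $n-2$, $n-3$, deduce $\la Q_k^n, P\ra_\nabla = 0$ for all $P$ of degree at most $n-3$ (respectively at most $n-1$ when $k=0$ or $k=n$), and then use monicity to pin down the expansion. Your extra steps --- the degree-block decoupling of the Sobolev basis and the positive definiteness of the within-degree Gram matrices, plus the leading-term matching that forces the top-block coefficient to be $\delta_{i,k}$ --- simply make explicit the linear-algebra argument that the paper compresses into the sentence ``Consequently, $Q_k^n$ can be written as a linear combination of the Sobolev orthogonal polynomials of degree $n$, $n-1$ and $n-2$.''
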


\begin{proof}
If $k=0$ and $P$ is any polynomial in $\Pi_{n-1}^2$, then, by Lemma \ref{lem:Qkn},
$$
   \la Q_0^n, P \ra_\nabla  =  \la P_0^{n-1}, \partial_1 P \ra_W = 0.
$$
Since the space $\{\partial_1 P: P \in  \Pi_{n-1}^2\}$ is $\Pi_{n-2}^2$, this shows that
$Q_0^n \in \CV_n^2(\nabla)$ and it is equal to $S_0^n$ as it is monic. The proof for $S_n^n$ is
similar. Moreover, if $1 \leqslant k \leqslant n$, it follows from Lemma \ref{lem:Qkn} that
$$
 \la  Q_k^n, P \ra_\nabla  =  \la \partial_1 Q_k^{n}, \partial_1 P \ra_W+  \la \partial_2 Q_k^{n}, \partial_2 P \ra_W =0
$$
for any polynomial $P$ of degree at most $n-3$. Consequently,  $Q_k^n$ can be written as a linear combination of
the Sobolev orthogonal polynomials of degree $n, n-1$ and $n-2$. Since both $Q_k^n$ and $S_k^n$ are monic by
definition, \eqref{eq:Qkn=Skn} follows.
\end{proof}

To determine the matrices $\Ab_{n-1}$ and $\Bb_{n-2}$, we need to work with specific weight functions. The
simplest cases are the product Laguerre polynomials for which $\Bb_{n-2} =0$ and the product Gegenbauer
polynomials for which $\Ab_{n-1} =0$. These two cases will be worked out in detail in the next two sections.

\section{The product Laguerre weight}
\setcounter{equation}{0}

In this section we consider the product of Laguerre weight functions and the inner product
\eqref{eq:Laguerre}. The Laguerre polynomials are defined by (cf. \cite[Chapt V]{Szego})
\begin{eqnarray*}
L_n^{\a}(x) := \frac{(\a+1)_n}{n!}{}_1F_1(-n;\a+1;x) = \frac{(-1)^n}{n!} x^n + \cdots
\end{eqnarray*}
and their orthogonality is given by
$$
\la L_n^{\a}, L_m^{\a} \ra_{w_\a} := \frac{1}{\Gamma(\a+1)}\int_{0}^{+\infty} L_n^{\a}(x)L_m^\a(x)\,w_\a(x) dx
  = \frac{(\a+1)_n}{n!} \d_{n,m},
$$
where $(a)_n = a(a+1) \cdots (a+n-1),$ $n\geqslant 1,$ $(a)_0= 1,$   is the Pochhammer symbol. Furthermore, they satisfy the relation
(\cite[p. 102]{Szego})
$$
L_n^{\a}(x) = -\frac{d}{dx}\, L_{n+1}^{\a}(x) + \frac{d}{dx} \, L_n^{\a}(x),
$$
which shows that the Laguerre weight function $w_\a$ is self-coherent. Monic Laguerre orthogonal polynomial
$p_n(w_\a)$ and its $L^2$ norm are given by
$$
    p_n(w_\a; x) := (-1)^n \,n!\, L_n^{\a}(x), \qquad  h_n^\a: = \la  p_n(w_\a), p_n(w_\a)\ra_{w_\a} = n! \, (\a+1)_n.
$$
From these relations, it follows readily that the polynomial
$$
  q_n(w_\a; x):=  p_n(w_\a; x) + n p_{n-1}(w_\a; x)
$$
satisfies $q_n'(w_\a; x) = n p_{n-1}(w_\a;x)$ for $n =0,1,2, \ldots$

We are now ready to state our polynomials in two variables for the product Laguerre weight function
$W_{\a,\b}$ on $\RR_+^2$, with $\a,\b > -1$. We again denote the orthogonal polynomials by $P_k^n$,
$$
 P_k^n(x,y) := p_{n-k}(w_\a; x) p_k(w_\b; y),   \qquad 0 \leqslant k \leqslant n.
$$
It follows readily that these are mutually orthogonal polynomials and
\begin{equation}\label{eq:hkn-Laguerre}
  h_k^n: = \la P_k^n, P_k^n \ra_{W_{\a,\b}}  =   h^{\a}_{n-k}\,h^{\b}_{k}=(n-k)!\,k!\,(\a+1)_{n-k}\,(\b+1)_k.
\end{equation}
We also define the monic polynomial $Q_k^n$ by
$$
 Q_k^n(x,y) := q_{n-k}(w_\a; x) q_k(w_\b; y),   \qquad 0 \leqslant k \leqslant n.
$$
In this setting, their partial derivative for $1 \leqslant k\leqslant n$ in Lemma \ref{lem:Qkn} becomes the following:

\begin{lem} \label{lem:Q-Laguerre}
For $1\leqslant k\leqslant n-1$, the following formulas hold
\begin{align*}
\partial_1 \, Q_k^n(x,y) & = (n-k) \left[P_k^{n-1}(x,y) + k\, P_{k-1}^{n-2}(x,y)\right],  \\
\partial_2 \, Q_k^n(x,y) & = k \left[P_{k-1}^{n-1}(x,y) + (n-k) \,P_{k-1}^{n-2}(x,y)\right].
\end{align*}
\end{lem}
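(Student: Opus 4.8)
The plan is to recognize Lemma \ref{lem:Q-Laguerre} as the specialization of Lemma \ref{lem:Qkn} to the Laguerre weight, where the coherence coefficients collapse to $a_n(w_\a)=1$ and $b_n(w_\a)=0$ for every $n$. Indeed, the relation $L_n^\a(x) = -\frac{d}{dx} L_{n+1}^\a(x) + \frac{d}{dx} L_n^\a(x)$, rewritten in terms of the monic polynomials $p_n(w_\a;x) = (-1)^n n!\, L_n^\a(x)$, is precisely the self-coherence relation \eqref{eq:coherent} with $a_n = 1$ and no $p_{n-1}'$ term, so that $b_n = 0$. This is already encoded in the stated form $q_n(w_\a;x) = p_n(w_\a;x) + n\, p_{n-1}(w_\a;x)$, which matches the general definition \eqref{eq:q_n} exactly when $a_{n-1}=1$ and $b_{n-1}=0$. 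Substituting these values into the two formulas of Lemma \ref{lem:Qkn} for $1 \leqslant k \leqslant n-1$ immediately drops the $b$-terms and sets the $a$-coefficients to $1$, yielding the asserted identities.

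Rather than invoke the general lemma, however, I would prefer a self-contained direct computation, since it keeps the index bookkeeping transparent. Starting from $Q_k^n(x,y) = q_{n-k}(w_\a;x)\, q_k(w_\b;y)$ and differentiating in the first variable, the identity $q_m'(w;x) = m\, p_{m-1}(w;x)$ gives $\partial_1 Q_k^n = (n-k)\, p_{n-k-1}(w_\a;x)\, q_k(w_\b;y)$. I would then expand the undifferentiated factor using $q_k(w_\b;y) = p_k(w_\b;y) + k\, p_{k-1}(w_\b;y)$, producing two product terms.

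The key step is to match each resulting product of monic one-variable polynomials against the two-variable notation $P_j^m(x,y) = p_{m-j}(w_\a;x) p_j(w_\b;y)$. For the first term, $p_{n-k-1}(w_\a;x) p_k(w_\b;y)$ has $x$-degree $n-k-1$ and $y$-index $k$, hence total degree $n-1$, so it equals $P_k^{n-1}$; the second term $p_{n-k-1}(w_\a;x) p_{k-1}(w_\b;y)$ has total degree $n-2$ and $y$-index $k-1$, so it equals $P_{k-1}^{n-2}$. This gives $\partial_1 Q_k^n = (n-k)[P_k^{n-1} + k\, P_{k-1}^{n-2}]$. The computation for $\partial_2 Q_k^n$ is symmetric: differentiating in $y$ yields $k\, q_{n-k}(w_\a;x)\, p_{k-1}(w_\b;y)$, and after expanding $q_{n-k}(w_\a;x) = p_{n-k}(w_\a;x) + (n-k)\, p_{n-k-1}(w_\a;x)$ the two products become $P_{k-1}^{n-1}$ and $P_{k-1}^{n-2}$, giving $\partial_2 Q_k^n = k[P_{k-1}^{n-1} + (n-k)\, P_{k-1}^{n-2}]$.

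Since there are no analytic difficulties, the only place to exercise care is this index translation between the one-variable factors and the $P_j^m$. In particular one must track that differentiating in $x$ lowers the total degree by one while preserving the $y$-index $k$, whereas the $q$-expansion of the surviving $y$-factor lowers both the total degree and the $y$-index by one. Matching these shifts correctly against the convention $P_j^m = p_{m-j}(w_\a)\, p_j(w_\b)$ is the whole content of the lemma.
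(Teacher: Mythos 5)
Your proposal is correct and matches the paper's approach: the paper states this lemma without separate proof, precisely as the specialization of Lemma~\ref{lem:Qkn} to the Laguerre case $a_n(w_\a)=1$, $b_n(w_\a)=0$, which is your first paragraph. Your ``self-contained'' computation (differentiate one factor via $q_m'=m\,p_{m-1}$, expand the other via $q_k=p_k+k\,p_{k-1}$, and match indices against $P_j^m$) is exactly the argument the paper uses to prove Lemma~\ref{lem:Qkn} itself, so it is the same route rather than a genuinely different one.
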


Recall that $\CV_n^2(\nabla, W_{\a,\b})$, $n \geqslant 1$, is the space of Sobolev orthogonal polynomials with respect to the bilinear form $\la \cdot, \cdot \ra_\nabla$ defined in \eqref{eq:ipd-nabla}. Let $S_k^n = x^{n-k} y^k + \cdots$ be
a monic orthogonal polynomial in $\CV_n^2(\nabla, W_{\a,\b})$. Then relation \eqref{eq:Qn=Sn} becomes
\begin{equation}\label{eq:Q-S-Laguerre}
     \QQ_n \c= \SS_n + \Ab_{n-1} \SS_{n-1}.
\end{equation}
Our goal is to show how $\Ab_{n-1}$ can be explicitly computed. To this end, we need explicit formulas for
the inner products of the gradients of the polynomials $Q_k^n$. In the following we write $\la \cdot,\cdot\ra = \la \cdot,\cdot\ra_{W_{\a,\b}}$.

\begin{lem}\label{lem:Qoc} For $0\leqslant i \leqslant n$ and $0\leqslant l\leqslant m$,
\begin{align*}
\la  Q^n_i,  Q^m_l \ra_\nabla
  =& \left[ l (m-l)^2 \, h^{m-2}_{l-1} \, \d_{i,l-1} + l^2 (m-l) \, h^{m-2}_{l-1}  \, \d_{i,l} \right] \d_{n,m-1} \\
  &+  \left[(m-l)^2 \, h^{m-1}_{l} \, \d_{i,l} +2 l^2 (m-l)^2 \, h^{m-2}_{l-1} \, \d_{i,l} + l^2 h^{m-1}_{l-1} \, \d_{i,l} \right] \d_{n,m} \\
  & +  \left[ (l+1) (m-l)^2 \, h^{m-1}_{l} \, \d_{i-1,l} + l^2 (m+1-l)\, h^{m-1}_{l-1} \, \d_{i,l} \right] \d_{n,m+1}.  \
\end{align*}
In particular,
\begin{align*}
\la Q^n_0,  Q^m_l \ra_\nabla &= (m-1)^2 \, h^{m-2}_{0} \, \d_{l,1} \, \d_{n,m-1} + m^2 \, h^{m-1}_{0}\, \d_{l,0} \, \d_{n,m}, \\
\la Q^n_n, Q^m_l \ra_\nabla &= (m-1)^2 \, h^{m-2}_{m-2} \, \d_{l,n} \, \d_{n,m-1} + m^2 \, h^{m-1}_{m-1} \, \d_{l,n} \, \d_{n,m}.
\end{align*}
\end{lem}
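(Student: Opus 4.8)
The plan is to collapse the entire computation onto the mutual orthogonality of the product polynomials $P_k^n$ by feeding the gradients through Lemma~\ref{lem:Q-Laguerre}. By the definition \eqref{eq:ipd-nabla} of the bilinear form,
$$
 \la Q_i^n, Q_l^m \ra_\nabla = \la \partial_1 Q_i^n, \partial_1 Q_l^m \ra +  \la \partial_2 Q_i^n, \partial_2 Q_l^m \ra,
$$
and Lemma~\ref{lem:Q-Laguerre} expresses each of the four factors as a two--term combination of $P$'s: $\partial_1 Q_k^n$ sits in the span of $P_k^{n-1}$ and $P_{k-1}^{n-2}$, while $\partial_2 Q_k^n$ sits in the span of $P_{k-1}^{n-1}$ and $P_{k-1}^{n-2}$, with the explicit prefactors $(n-k)$, $(n-k)k$, $k$, $k(m-k)$ that the lemma records. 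Substituting these and expanding by bilinearity, the whole sum simplifies because $\la P_a^b, P_c^d \ra = h_a^b\,\d_{b,d}\,\d_{a,c}$: only those cross terms whose upper \emph{and} lower indices both agree survive.

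First I would read off the surviving pairings. Matching the upper indices forces $n-m\in\{-1,0,1\}$, which is precisely the origin of the three factors $\d_{n,m-1}$, $\d_{n,m}$, $\d_{n,m+1}$ in the statement. Within each case the lower--index match then pins $i$ to $l$: pairing the leading $P_i^{n-1}$ of $\partial_1 Q_i^n$ against the tail $P_{l-1}^{m-2}$ of $\partial_1 Q_l^m$ forces $i=l-1$ and produces the $\d_{i,l-1}$ term of the $\d_{n,m-1}$ block, while pairing the tail $P_{i-1}^{n-2}$ against the leading $P_l^{m-1}$ forces $i=l+1$ and produces the $\d_{i-1,l}$ term of the $\d_{n,m+1}$ block; every remaining surviving pairing forces $i=l$ and feeds a $\d_{i,l}$ term. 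Reading off each coefficient as the product of the two prefactors times the appropriate $h$, and imposing the value of $n$ dictated by the attached Kronecker delta (so that $h_i^{n-1}=h_l^{m-1}$, $h_i^{n-1}=h_{l-1}^{m-2}$, etc.), reproduces exactly the three bracketed expressions.

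The one coefficient that deserves a second look is $2l^2(m-l)^2 h_{l-1}^{m-2}$ attached to $\d_{i,l}$ in the $\d_{n,m}$ block. It is a genuine sum of two equal contributions, one from pairing the $P_{i-1}^{n-2}$ tails of the two $\partial_1$ expansions and one from pairing the $P_{i-1}^{n-2}$ tails of the two $\partial_2$ expansions; once $n=m$ and $i=l$ are imposed, each equals $l^2(m-l)^2 h_{l-1}^{m-2}$, and the two add. Together with the two diagonal leading pieces, $(m-l)^2 h_l^{m-1}$ from the $\partial_1$ heads and $l^2 h_{l-1}^{m-1}$ from the $\partial_2$ heads, this completes the $\d_{n,m}$ block.

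Finally I would dispose of the boundary indices. Lemma~\ref{lem:Q-Laguerre} provides the generic derivative formulas only for $1\leqslant k\leqslant n-1$, so the cases $i\in\{0,n\}$ and $l\in\{0,m\}$ must be checked against the degenerate identities $\partial_1 Q_0^n = n P_0^{n-1}$, $\partial_2 Q_0^n=0$, $\partial_1 Q_n^n=0$, $\partial_2 Q_n^n = n P_{n-1}^{n-1}$; one verifies that the general bracketed expressions specialize correctly there, the absent $P_{-1}$ and out--of--range terms vanishing automatically. Setting $i=0$ kills every term carrying $\d_{i,l-1}$ with $l\neq1$ and every term with an explicit factor $l$ at $l=0$, leaving just $(m-1)^2 h_0^{m-2}\d_{l,1}\d_{n,m-1}+m^2 h_0^{m-1}\d_{l,0}\d_{n,m}$; setting $i=n$ gives the symmetric statement. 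The only real difficulty in the argument is the bookkeeping: keeping the fourfold expansion organized so that each surviving term is attached to the correct pair of Kronecker deltas, and not overlooking the doubled diagonal contribution.
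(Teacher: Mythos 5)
Your proposal is correct and follows essentially the same route as the paper: expand $\la Q_i^n,Q_l^m\ra_\nabla$ via \eqref{eq:ipd-nabla}, substitute the two-term expansions of $\partial_1 Q_k^n$ and $\partial_2 Q_k^n$ from Lemmas \ref{lem:Qkn} and \ref{lem:Q-Laguerre}, and reduce everything to the orthogonality relations $\la P_a^b,P_c^d\ra = h_a^b\,\d_{a,c}\,\d_{b,d}$ with the norms \eqref{eq:hkn-Laguerre}, tracking the resulting Kronecker deltas (your treatment of the doubled diagonal term and of the boundary indices $i\in\{0,n\}$ is exactly the bookkeeping the paper leaves as ``computed similarly''; the only blemish is the typo $k(m-k)$ for the prefactor $k(n-k)$).
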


\begin{proof}
Directly from the definition,
$$
\la Q^n_i, Q^m_l \ra_\nabla = \la \nabla Q^n_i, \nabla Q^m_l \ra =\la \partial_1 Q^n_i, \partial_1 Q^m_l \ra + \la \partial_2 Q^n_i, \partial_2 Q^m_l \ra.
$$
By Lemmas \ref{lem:Qkn} and \ref{lem:Q-Laguerre}, the inner product $\la \partial_j Q^n_i, \partial_j Q^m_l \ra$
can be computed by the orthogonality of $P_k^n$ and \eqref{eq:hkn-Laguerre}. For example,
\begin{align*}
\la \partial_1 Q^n_i, \partial_1 Q^m_l \ra = &  (n-i) (m-l) \, \la  P^{n-1}_{i}, P^{m-1}_{l} \ra
      + l (n-i) (m-l) \, \la  P^{n-1}_{i}, P^{m-2}_{l-1} \ra  \\
 & +  i (n-i) (m-l) \, \la  P^{n-2}_{i-1}, P^{m-1}_{l} \ra + i l (n-i) (m-l) \, \la  P^{n-2}_{i-1}, P^{m-2}_{l-1} \ra \\
 = & (n-i) (m-l) \, h^{n-1}_{i} \, \d_{i,l} \, \d_{n,m} + l (n-i) (m-l) \, h^{n-1}_{i}  \, \d_{i,l-1} \, \d_{n,m-1} \\
 & +  i (n-i) (m-l) \, h^{n-2}_{i-1} \, \d_{i-1,l}  \, \d_{n-1,m} + i l (n-i) (m-l) \, h^{n-2}_{i-1} \, \d_{i,l}  \, \d_{n,m}
\end{align*}
The other terms are computed similarly.
\end{proof}

\begin{cor}\label{cor:Qni-Qml}
For $0\leqslant i \leqslant n$, $0\leqslant l\leqslant m$, and $m\leqslant n-1$ it holds
$$
\la  Q^n_i,  Q^m_l \ra_\nabla
  =  \left[ (l+1) (m-l)^2 \, h^{m-1}_{l} \, \d_{i-1,l} + l^2 (m+1-l)\, h^{m-1}_{l-1} \, \d_{i,l} \right] \d_{n-1,m}.\\
$$
In particular,
\begin{align*}
\la  Q^n_0,   Q^m_l \ra_\nabla = 0 \quad \hbox{and} \quad \la  Q^n_n, Q^m_l \ra_ \nabla = 0, \qquad m<n.
\end{align*}
\end{cor}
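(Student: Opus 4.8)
The plan is to read the corollary straight off the master formula in Lemma~\ref{lem:Qoc} by tracking which Kronecker deltas gate its three summands. Writing the right-hand side of that lemma schematically as $T_{-}\,\d_{n,m-1} + T_{0}\,\d_{n,m} + T_{+}\,\d_{n,m+1}$, where $T_{-}, T_0, T_{+}$ denote the three bracketed expressions, I note that each block is switched on by exactly one delta comparing the total degrees $n$ and $m$. The whole argument then reduces to deciding which of $\d_{n,m-1}$, $\d_{n,m}$, $\d_{n,m+1}$ can survive under the standing hypothesis, so I would organize everything around these gating deltas before touching the brackets at all.

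First I would impose the constraint $m\leqslant n-1$, i.e. $n\geqslant m+1$. Then $n\neq m-1$ and $n\neq m$, so $\d_{n,m-1}=\d_{n,m}=0$ and both $T_{-}$ and $T_0$ vanish identically. Only the block $T_{+}\,\d_{n,m+1}$ can remain, and it is nonzero precisely on the boundary case $m=n-1$. Since $\d_{n,m+1}=\d_{n-1,m}$, rewriting $T_{+}$ with this delta produces exactly the claimed identity
$$
\la Q^n_i, Q^m_l \ra_\nabla = \left[ (l+1)(m-l)^2\, h^{m-1}_{l}\, \d_{i-1,l} + l^2(m+1-l)\, h^{m-1}_{l-1}\, \d_{i,l} \right] \d_{n-1,m}.
$$

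For the two special cases I would substitute $i=0$ and $i=n$ into this surviving expression and check that each term carries a vanishing factor. With $i=0$ the first term dies because $\d_{i-1,l}=\d_{-1,l}=0$ for $l\geqslant 0$, while the second term forces $l=0$ through $\d_{i,l}$, at which point the prefactor $l^2$ vanishes; hence $\la Q^n_0, Q^m_l \ra_\nabla =0$ whenever $m<n$. With $i=n$ the second term requires $l=n$ through $\d_{i,l}$, which is impossible since $l\leqslant m<n$, while the first term requires $l=n-1$, forcing $l=m=n-1$, where the factor $(m-l)^2$ vanishes; hence $\la Q^n_n, Q^m_l \ra_\nabla =0$ for $m<n$. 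Equivalently, one can simply read these two identities off the ``in particular'' formulas at the end of Lemma~\ref{lem:Qoc}, whose only surviving deltas $\d_{n,m-1}$ and $\d_{n,m}$ are both killed by $m<n$.

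There is no genuine analytic obstacle here: the content of the corollary is purely combinatorial, and the sole point demanding care is bookkeeping — ensuring that in the boundary case $m=n-1$ no coincidence of the remaining indices $i,l$ silently revives a term that the degree constraint was meant to remove, and that the rewriting $\d_{n,m+1}=\d_{n-1,m}$ is applied consistently throughout. Handling the gating deltas before the brackets keeps this bookkeeping transparent and makes the two ``in particular'' statements immediate consequences of the reduced formula.
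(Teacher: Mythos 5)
Your proposal is correct and follows exactly the route the paper intends: the corollary is a direct specialization of Lemma~\ref{lem:Qoc}, where the hypothesis $m\leqslant n-1$ kills the $\d_{n,m-1}$ and $\d_{n,m}$ blocks, leaving only the $\d_{n,m+1}=\d_{n-1,m}$ term, and the two ``in particular'' identities follow by inspecting the surviving deltas (or the special-case formulas of the lemma). Your index bookkeeping for $i=0$ and $i=n$ is accurate, so nothing is missing.
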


To determine the matrix $\Ab_{n-1}$, we will need explicit forms of the following two matrices:
$$
  \Cb_n: = \la \QQ_{n+1},\QQ_n^\tr \ra_{\nabla} \quad \hbox{and}\quad \Db_n: = \la \QQ_n,\QQ_n^\tr \ra_{\nabla}.
$$

\begin{lem}
For $n =0,1,2,\ldots$, $\Db_n$ is a diagonal matrix
\begin{equation}\label{Dn}
\Db_n= \mathrm{diag}\{d_0^n, d_1^n, \ldots, d_n^n\},
\end{equation}
where
\begin{align*}
 d_j^n  = (n-j)^2 \, h_j ^{n-1} + j^2 \, h_{j-1}^{n-1} + 2j^2 (n-j)^2 \, h_{j-1}^{n-2}, \quad  0 \leqslant j \leqslant n,
\end{align*}
with $h_j^m$ as given in \eqref{eq:hkn-Laguerre}, and $\Cb_n: (n+2) \times (n+1)$ is a bidiagonal matrix,
\begin{equation}\label{eq:matrixC}
\Cb_n =
 \left[ \begin{matrix}
 0 & 0 &  & \cdots & 0 \\
c_{1,0}^n & c_{1,1}^n &  &  & \\
 & c_{2,1}^n & c_{2,2}^n &  &  \\
   &  & \ddots & \ddots &    \\
   &   &  & c_{n,n-1}^n &  c_{n,n}^n   \\
 0 &  \cdots & & 0 & 0
\end{matrix}
\right],
\end{equation}
where
\begin{align*}
c_{i,i}^n &= i^2 (n-i+1)\, h_{i-1}^{n-1}, \qquad  1\leqslant i \leqslant n,\\
c_{i+1,i}^n &=  (i+1) (n-i)^2\, h_{i}^{n-1}, \qquad 0\leqslant i \leqslant n-1.
\end{align*}
\end{lem}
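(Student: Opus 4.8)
The plan is to compute the two matrices entrywise by specializing the general inner product formula in Lemma~\ref{lem:Qoc} to the relevant ranges of indices, and then reading off the resulting sparsity pattern. The key observation is that $\Db_n = \la \QQ_n, \QQ_n^\tr \ra_\nabla$ has $(i,l)$-entry $\la Q_i^n, Q_l^n \ra_\nabla$, so I only need the $\d_{n,m}$ case of Lemma~\ref{lem:Qoc} with $m=n$. Setting $m=n$ kills the first and third bracketed groups (they carry factors $\d_{n,m-1}$ and $\d_{n,m+1}$), leaving only the middle group, all of whose terms carry $\d_{i,l}$. This immediately forces $\Db_n$ to be diagonal, and collecting the three surviving terms on the diagonal $i=l=j$ gives $d_j^n = (n-j)^2 h_j^{n-1} + 2j^2(n-j)^2 h_{j-1}^{n-2} + j^2 h_{j-1}^{n-1}$, which is exactly \eqref{Dn}.

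For $\Cb_n = \la \QQ_{n+1}, \QQ_n^\tr \ra_\nabla$, the $(i,l)$-entry is $\la Q_i^{n+1}, Q_l^n \ra_\nabla$, so the governing relation has the row-degree $n+1$ and the column-degree $m=n$. First I would substitute $m = n$ into Lemma~\ref{lem:Qoc} with the upper index on the left being $n+1$; the only surviving group is the one carrying $\d_{n+1,m+1}=\d_{n+1,n+1}$, namely the third bracket, which reads
\begin{equation*}
 (l+1)(n-l)^2 \, h^{n-1}_{l} \, \d_{i-1,l} + l^2(n+1-l)\, h^{n-1}_{l-1} \, \d_{i,l}.
\end{equation*}
This is precisely the statement of Corollary~\ref{cor:Qni-Qml} with the pair $(n{+}1,n)$ in place of $(n,m)$. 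The term with $\d_{i,l}$ (requiring $i=l$) contributes to the main diagonal of $\Cb_n$ the value $c_{i,i}^n = i^2(n-i+1)h_{i-1}^{n-1}$ upon relabeling $l \to i$, while the term with $\d_{i-1,l}$ (requiring $i = l+1$) contributes to the subdiagonal the value $c_{i+1,i}^n = (i+1)(n-i)^2 h_i^{n-1}$ upon setting $l\to i$. Since these are the only two diagonals populated, $\Cb_n$ is bidiagonal of the stated shape, with the top row and bottom row vanishing because $Q_0^{n+1}$ and $Q_{n+1}^{n+1}$ pair to zero against all lower-degree $Q_l^n$ by the second half of Corollary~\ref{cor:Qni-Qml}; this accounts for the zero first and last rows of the displayed $(n+2)\times(n+1)$ matrix \eqref{eq:matrixC}.

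I expect no genuine obstacle here, since both claims are essentially a bookkeeping exercise that reorganizes the already-proved Lemma~\ref{lem:Qoc} and Corollary~\ref{cor:Qni-Qml}. The only point demanding care is the index shifting: Lemma~\ref{lem:Qoc} is stated for a generic pair $(n,m)$, and to read off $\Cb_n$ I must apply it with the \emph{left} index equal to $n+1$ rather than $n$, so the Kronecker delta $\d_{n,m+1}$ there becomes $\d_{n+1,n+1}$ and is satisfied identically; misaligning this shift by one would corrupt every entry. A secondary subtlety is confirming the dimensions: $\QQ_{n+1}$ has $n+2$ components and $\QQ_n$ has $n+1$, so $\Cb_n$ is genuinely $(n+2)\times(n+1)$ and the index $i$ ranges over $0,\dots,n+1$ while $l$ ranges over $0,\dots,n$, which is what produces the extra all-zero top and bottom rows rather than a square matrix. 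Once these indexing conventions are pinned down, the entries $c_{i,i}^n$ and $c_{i+1,i}^n$ and the diagonal $d_j^n$ fall out directly.
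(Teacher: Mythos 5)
Your proposal is correct and takes essentially the same approach as the paper: diagonality of $\Db_n$ is read off from the $\d_{n,m}$ term of Lemma~\ref{lem:Qoc}, and the bidiagonal shape of $\Cb_n$ with vanishing first and last rows comes from Corollary~\ref{cor:Qni-Qml}. The only cosmetic difference is that you apply the corollary to the pair of degrees $(n+1,n)$ whereas the paper states the same identity for $(n,n-1)$, a trivial relabeling.
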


\begin{proof}
The formula for $\Db_n$ follows directly from Lemma \ref{lem:Qoc}.  Furthermore, by
Corollary \ref{cor:Qni-Qml}, for $1\leqslant i\leqslant n-1$,
$$
\la \nabla Q^n_i, \nabla Q^{n-1}_l \ra
  =   (l+1) (n-1-l)^2 \, h^{n-2}_{l} \, \d_{i,l+1} + l^2 (n-l)\, h^{n-2}_{l-1} \, \d_{i,l},
$$
which shows that $\Cb_n$ is a bidiagonal matrix and its first and the last row are zero.
\end{proof}

We are now ready to determine the matrix $\Ab_{n-1}$ in \eqref{eq:Q-S-Laguerre}.

\begin{thm} \label{thm_3.5}
Let $\Hb_n^\nabla: = \la \SS_n, \SS_n^{\tr} \ra_\nabla$. Then  $\Hb_n^\nabla$ satisfies the recursive relation
\begin{align}
 \Hb_n^{\nabla}&= \Db_n  - \Cb_{n-1} (\Hb_{n-1}^{\nabla})^{-1} \Cb_{n-1}^{\tr} \label{eq:H-rec-Lag},
 \end{align}
where the iteration is initiated by $\Hb_1^\nabla = \Ib$, the identity matrix.
Furthermore, for $n =1,2,\ldots$,
the matrix $\Ab_n$ in \eqref{eq:Q-S-Laguerre} is determined by
\begin{align}
 \Ab_n  = \Cb_{n} (\Hb_n^{\nabla})^{-1}. \label{eq:A-rec-Lag}
\end{align}
\end{thm}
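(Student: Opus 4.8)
The plan is to exploit the expansion $\QQ_n \c= \SS_n + \Ab_{n-1}\SS_{n-1}$ from \eqref{eq:Q-S-Laguerre} together with the mutual orthogonality of the blocks $\SS_m$ under $\la\cdot,\cdot\ra_\nabla$. The crucial observation is that, because $\la\cdot,\cdot\ra_\nabla$ depends only on gradients and therefore annihilates constants, the relation $\c=$ may be treated as a genuine equality once it sits inside this bilinear form. Writing $\la \SS_n,\SS_m^\tr\ra_\nabla = \Hb_n^\nabla\,\d_{n,m}$, which records simultaneously that Sobolev polynomials of distinct degree are $\nabla$-orthogonal and that $\Hb_n^\nabla$ is the Gram matrix of $\SS_n$, I can reduce both assertions to short matrix computations. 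Throughout I will also use that $\la\cdot,\cdot\ra_\nabla$ is an inner product on $\Pi^2\backslash\RR$ and that the $S_k^n$ are monic with distinct leading terms $x^{n-k}y^k$, so $\Hb_n^\nabla$ is symmetric positive definite and hence invertible.

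First I would establish the companion identity $\Cb_n = \Ab_n \Hb_n^\nabla$, which is \eqref{eq:A-rec-Lag} in disguise. Expanding
$$
 \Cb_n = \la \QQ_{n+1}, \QQ_n^\tr \ra_\nabla = \la \SS_{n+1}+\Ab_n \SS_n,\ \SS_n^\tr + \SS_{n-1}^\tr \Ab_{n-1}^\tr \ra_\nabla
$$
and using bilinearity to pull the constant matrices $\Ab_n$ and $\Ab_{n-1}^\tr$ outside, three of the four resulting blocks vanish, since $\la \SS_{n+1},\SS_n^\tr\ra_\nabla$, $\la \SS_{n+1},\SS_{n-1}^\tr\ra_\nabla$ and $\la \SS_n,\SS_{n-1}^\tr\ra_\nabla$ are all zero by orthogonality across degrees. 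Only $\Ab_n\la \SS_n,\SS_n^\tr\ra_\nabla = \Ab_n\Hb_n^\nabla$ survives, so $\Cb_n = \Ab_n\Hb_n^\nabla$, and multiplying by $(\Hb_n^\nabla)^{-1}$ gives \eqref{eq:A-rec-Lag}.

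Next I would run the same expansion on $\Db_n$. From
$$
 \Db_n = \la \QQ_n, \QQ_n^\tr \ra_\nabla = \la \SS_n + \Ab_{n-1}\SS_{n-1},\ \SS_n^\tr + \SS_{n-1}^\tr \Ab_{n-1}^\tr \ra_\nabla
$$
the two cross terms drop by orthogonality and the diagonal terms leave $\Db_n = \Hb_n^\nabla + \Ab_{n-1}\Hb_{n-1}^\nabla\Ab_{n-1}^\tr$, i.e. $\Hb_n^\nabla = \Db_n - \Ab_{n-1}\Hb_{n-1}^\nabla\Ab_{n-1}^\tr$. Substituting $\Ab_{n-1} = \Cb_{n-1}(\Hb_{n-1}^\nabla)^{-1}$ from the previous step and using the symmetry of $\Hb_{n-1}^\nabla$ to simplify $(\Hb_{n-1}^\nabla)^{-1}\Hb_{n-1}^\nabla\big((\Hb_{n-1}^\nabla)^{-1}\big)^\tr = (\Hb_{n-1}^\nabla)^{-1}$ collapses the middle factor and yields \eqref{eq:H-rec-Lag}. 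The base case is immediate: for $n=1$ the term $\Ab_0\SS_0$ is constant, so $\QQ_1\c=\SS_1$ and hence $\Hb_1^\nabla=\Db_1$; computing $d_0^1 = h_0^0$ and $d_1^1 = h_0^0$ with the normalization $h_0^0 = \la 1,1\ra_{W_{\a,\b}} = 1$ gives $\Hb_1^\nabla = \Ib$.

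I expect the main obstacle to be purely the transpose bookkeeping: one must check that $(\Ab_{n-1}\SS_{n-1})^\tr = \SS_{n-1}^\tr\Ab_{n-1}^\tr$, that the constant matrices factor out of the matrix-valued form on the correct side, and that the symmetry of $\Hb_{n-1}^\nabla$ is invoked precisely to turn $\Ab_{n-1}^\tr$ into $(\Hb_{n-1}^\nabla)^{-1}\Cb_{n-1}^\tr$. The only analytic ingredient beyond algebra is the invertibility of $\Hb_n^\nabla$, which rests on $\la\cdot,\cdot\ra_\nabla$ being positive definite on $\Pi^2\backslash\RR$ as noted after Theorem~\ref{sobolev-basis}; everything else follows from orthogonality across degrees and the expansion \eqref{eq:Q-S-Laguerre}.
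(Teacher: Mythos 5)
Your proof is correct and takes essentially the same route as the paper's: both rest on the expansion \eqref{eq:Q-S-Laguerre} (applied at consecutive degrees), bilinearity of $\la \cdot,\cdot\ra_\nabla$, and orthogonality of the Sobolev polynomials across degrees, producing the two key identities $\Cb_n = \Ab_n \Hb_n^{\nabla}$ and $\Hb_n^{\nabla} = \Db_n - \Cb_{n-1}(\Hb_{n-1}^{\nabla})^{-1}\Cb_{n-1}^{\tr}$ with the symmetry of $\Hb_{n-1}^{\nabla}$ invoked at the same point. Your version merely reorganizes the computation (expanding both Gram matrices fully in terms of the $\SS_m$ at once, rather than the paper's sequential substitutions starting from $\la \SS_{n+1},\SS_n^{\tr}\ra_\nabla = 0$) and adds an explicit check of the base case $\Hb_1^{\nabla} = \Ib$, which the paper leaves implicit.
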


\begin{proof}
Using the orthogonality of $\SS_n$ and
the fact that $S_k^n - Q_k^n \in \Pi_{n-1}^2$, we obtain from \eqref{eq:Q-S-Laguerre} that
\begin{align*}
 \la  \SS_{n+1},   \SS_{n}^{\tr}\ra_\nabla  &= \la \QQ_{n+1}, \QQ_{n}^{\tr}\ra_\nabla  - \Ab_n \la \SS_{n}, \SS_{n}^{\tr}\ra_\nabla \\
& = \la \QQ_{n+1},  \QQ_{n}^{\tr}\ra_\nabla  - \Ab_n \la   \QQ_{n},  \SS_{n}^{\tr}\ra_\nabla \\
& = \la  \QQ_{n+1},  \QQ_{n}^{\tr}\ra_\nabla - \Ab_n \la  \QQ_{n},  (\QQ_{n}-\Ab_{n-1}\,\SS_{n-1})^\tr \ra_\nabla,
\end{align*}
where we have used \eqref{eq:Q-S-Laguerre} once more . Hence, it follows that
\begin{align*}
 \la  \SS_{n+1},   \SS_{n}^{\tr}\ra_\nabla  &= \la \QQ_{n+1}, \QQ_{n}^{\tr}\ra_\nabla -  \Ab_n \la   \QQ_{n},  \QQ_{n}^{\tr}\ra_\nabla
   +  \Ab_n \la   \QQ_{n},   \SS_{n-1}^{\tr}\ra_\nabla \Ab_{n-1}^{\tr} \\
&= \la \QQ_{n+1}, \QQ_{n}^{\tr}\ra_\nabla - \Ab_n \la \QQ_{n},\QQ_{n}^{\tr}\ra_\nabla +  \Ab_n \la \QQ_{n}, \QQ_{n-1}^{\tr}\ra_\nabla \Ab_{n-1}^{\tr}.
\end{align*}
Consequently, from $\la \nabla \SS_{n+1}, \nabla \SS_{n}^{\tr}\ra=0$ we obtain
\begin{equation} \label{An-rec}
 \Ab_n \left[\la \QQ_{n},  \QQ_{n}^{\tr}\ra_\nabla - \la \QQ_{n}, \QQ_{n-1}^{\tr}\ra_ \nabla \Ab_{n-1}^{\tr} \right]=
    \la \QQ_{n+1},  \QQ_{n}^{\tr}\ra_\nabla.
\end{equation}
Next we compute $\Hb_n^{\nabla} = \la \SS_n, \SS_n^{\tr} \ra_\nabla$ by using \eqref{eq:Q-S-Laguerre} and the orthogonality of $\SS_n$,
\begin{align} \label{Hn-rec}
 \Hb_n^{\nabla} & = \la  \QQ_n,  \SS_n^{\tr} \ra_ \nabla = \la \QQ_n, (  \QQ_n-\Ab_{n-1} \SS_{n-1})^{\tr} \ra _\nabla \\
  & =   \la  \QQ_n,  \QQ_n^{\tr} \ra_\nabla - \la  \QQ_n,   \SS_{n-1}^{\tr} \ra_\nabla \Ab_{n-1}^{\tr}  \notag \\
  & = \la\QQ_n, \QQ_n^{\tr} \ra_ \nabla - \la \QQ_n,  \QQ_{n-1}^{\tr} \ra_\nabla \Ab_{n-1}^{\tr}. \notag
\end{align}
Since $\Hb_n^\nabla$ is nonsingular, substituting the above relation into \eqref{An-rec} proves \eqref{eq:A-rec-Lag}.
Furthermore, substituting \eqref{eq:A-rec-Lag} into \eqref{Hn-rec} shows that $\Hb_n^{\nabla}$ satisfies
\begin{align*}
\Hb_n^{\nabla}& = \la\QQ_n, \QQ_n^{\tr} \ra_ \nabla - \la \QQ_n,  \QQ_{n-1}^{\tr} \ra_\nabla
( \la\QQ_{n},\QQ_{n-1}^\tr \ra_\nabla (\Hb_{n-1}^{\nabla})^{-1})^{\tr},
\end{align*}
which simplifies to \eqref{eq:H-rec-Lag} from the symmetry of $\Hb_{n-1}^{\nabla}$, and therefore completes the proof.
\end{proof}

The theorem shows that $\Hb_n^\nabla$, hence $\Ab_n$, can be determined iteratively.

Since $S_0^n = Q_0^n$ and $S_n^n = Q_n^n$, we only need to determine $S_k^n$ for $1 \leqslant k \leqslant n-1$. This
additional information is reflected in the matrix structure, as shown in Theorem \ref{thm:Q=S} and \eqref{eq:matrixC},
$$
\Ab_{n-1}=\left[
\begin{array}{ccc} 0 & \dots & 0 \\ \hline  & & \\
& \wt{\Ab}_{n-1} & \\  & & \\ \hline 0 & \dots & 0
\end{array} \right]
\quad \hbox{and} \quad
 \Cb_{n-1} = \left[
\begin{array}{ccc} 0 & \dots & 0 \\ \hline  & & \\
& \wt \Cb_{n-1} & \\
 & & \\ \hline 0 & \dots & 0
\end{array} \right],
$$
where $\wt{\Ab}_{n-1}$ and $\wt \Cb_{n-1}$ are matrices of size $(n-1)\times n$. These suggest a further
simplification in the iteration, which we now explore.

The matrix structure shows that
$$
\Hb_{n}^{\nabla} = \Db_n - \Cb_{n-1} \Ab_{n-1}^{\tr} \\
  = \left[
\begin{array}{ccc}
d_0^n & & \\
 & \wt{\Db}_{n} &  \\
& & d_n^{n}
\end{array}
\right] -  \left[
\begin{array}{ccc}
0 & \cdots & 0 \\
\vdots & \wt \Cb_{n-1} \wt{\Ab}_{n-1}^{\tr} & \vdots \\
0 & \cdots & 0
\end{array}
\right],
$$
which shows that the matrix $\Hb_n^\nabla$ takes the form
\begin{equation} \label{eq:htHn}
  \Hb_n^\nabla = \left [ \begin{matrix} d_0^n & & 0 \\
      & \wh \Hb_n^\nabla & \\   0 & & d_n^n   \end{matrix} \right]
        \quad \hbox{with} \quad \wh{\Hb}_{n}^{\nabla} = \wt{\Db}_n - \wt \Cb_{n-1} \wt{\Ab}_{n-1}^{\tr}.
\end{equation}
Consequently, we only need to determine $\wh \Hb_n^\nabla$. Let us further write
$$
\wt \Cb_n = \left[\begin{array}{c|c|c}
  c_{1,0}^n &  & 0 \\
\vdots & \wh{\Cb}_{n} & \vdots \\
0 &  & c_{n,n}^{n}
\end{array}
\right] \quad \hbox{with}\quad \wh{\Cb}_{n} =
\left[\begin{matrix}
 c_{1,1}^n &  &  & \bigcirc \\
  c_{2,1}^n & c_{2,2}^n & &  \\
   & \ddots & \ddots  & \\
  & &  c_{n-1,n-2}^n & c_{n-1,n-1}^n \\
\bigcirc & & & c_{n,n-1}^n
\end{matrix}\right].
$$
It then follows from $\Ab_n = \Cb_{n} \left(\Hb_{n}^{\nabla}\right)^{-1}$ at \eqref{eq:A-rec-Lag} that
$$
\wt{\Ab}_{n} = \wt \Cb_{n}
\left[
\begin{matrix}
(d_0^n)^{-1} & \ldots & 0 \\
 & \left(\wh{\Hb}_{n}^{\nabla}\right)^{-1} &  \\
0 & \ldots & (d_n^{n})^{-1}
\end{matrix}
\right] = \left[ \begin{array}{c|c|c}
1 &  & 0 \\ \vdots & \wh{\Cb}_{n} \left(\wh{\Hb}_{n}^{\nabla}\right)^{-1} & \vdots \\ 0 &  & 1
\end{array}
\right],
$$
where we have used the fact that $c_{1,0}^{n} = d_0^n = n^2 h_0^{n-1}$ and $c_{n,n}^{n} = d_n^n = n^2 h_{n-1}^{n-1}$,
which follow directly from their explicit formulas. Consequently, we see that $\wt \Ab_n$ is of the form
\begin{equation} \label{eq:htAn}
\wt{\Ab}_{n} = \left[ \eb_1 | \wh{\Ab}_{n} | \eb_n \right] \quad \hbox{with} \quad
   \wh{\Ab}_{n} = \wh{\Cb}_{n} \left(\wh{\Hb}_{n}^{\nabla}\right)^{-1},
\end{equation}
where $\eb_1, \eb_{n}$ are, respectively, the first and the last vector in the canonical basis of $\RR^{n}$.
Consequently, it follows that
$$
\wt \Cb_{n-1} \wt{\Ab}_{n-1}^{\tr}  =d_{0}^{n-1} \eb_1 \eb_1^{\tr} +  \wh{\Cb}_{n-1} \wh{\Ab}_{n-1}^{\tr} +
   d_{n-1}^{n-1} \eb_{n-1} \eb_{n-1}^{\tr}.
$$
We finally conclude by  \eqref{eq:htHn} that the matrix $\wh \Hb_n^{\nabla}$ satisfies the relation
$$
   \wh{\Hb}_{n}^{\nabla} = \wh \Db_n  - \wh \Cb_{n-1} \wh{\Ab}_{n-1}^{\tr},
$$
where $\wh \Db_n$ is the diagonal matrix
$$
\wh{\Db}_{n} = \wt \Db_n - d_{0}^{n-1} \eb_1 \eb_1^{\tr} -  d_{n-1}^{n-1} \eb_{n-1} \eb_{n-1}^{\tr}.
$$
Summing up, we have proved the following proposition.

\begin{prop}
Let $\wh \QQ_n : =  (Q_1^n, \ldots, Q_{n-1}^n)$ and $\wh \SS_n : =
(S_1^n, \ldots, S_{n-1}^n)$. Then $\wh \Hb_n^\nabla = \la \wh \SS_n, \wh \SS_n^\tr \ra_\nabla$.
Furthermore, for $n =2,3,\ldots$,
\begin{equation} \label{eq:whQ=whS}
   \wh \QQ_n \c=  \wh \SS_n + \left[ \eb_1 \big \vert  \wh{\Ab}_{n-1} \big \vert  \eb_{n-1} \right] \SS_{n-1},
\end{equation}
where the matrices $\wh \Ab_n$ of size $n \times (n-1)$ and $\wh \Hb_n^{\nabla}$ of size $(n-1)\times(n-1)$ are determined
iteratively by
$$
 \wh{\Ab}_{n} = \wh{\Cb}_{n} \big(\wh{\Hb}_{n}^{\nabla}\big)^{-1} \quad \hbox{and}\quad
 \wh{\Hb}_{n}^{\nabla} = \wh \Db_n  - \wh \Cb_{n-1} \wh{\Ab}_{n-1}^{\tr}
$$
for $n=3,4,\ldots,$ with the starting point $\wh \Ab_1 = 0$.
\end{prop}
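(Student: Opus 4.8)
The plan is to recognize that this proposition merely consolidates the matrix manipulations carried out in the paragraphs immediately preceding it, by restricting the full relation \eqref{eq:Q-S-Laguerre} to the interior indices $1 \leqslant k \leqslant n-1$. Since $S_0^n \c= Q_0^n$ and $S_n^n \c= Q_n^n$ by \eqref{eq:Q0n=S0n}, the two boundary polynomials are already determined outright, and the real content is that the interior polynomials $S_1^n,\ldots,S_{n-1}^n$ \emph{decouple} from them and obey a self-contained iteration. I would therefore organize the proof around three claims, each of which is read off from an already-established formula.

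First I would verify the identity $\wh{\Hb}_n^\nabla = \la \wh{\SS}_n, \wh{\SS}_n^\tr \ra_\nabla$. By definition $\Hb_n^\nabla = \la \SS_n, \SS_n^\tr \ra_\nabla$ is the full $(n+1)\times(n+1)$ Gram matrix, and \eqref{eq:htHn} exhibits it in block-diagonal form with corner entries $d_0^n, d_n^n$ and central block $\wh{\Hb}_n^\nabla$. That central block is precisely the submatrix of $\la \SS_n, \SS_n^\tr \ra_\nabla$ indexed by $1 \leqslant i,j \leqslant n-1$, namely $\la \wh{\SS}_n, \wh{\SS}_n^\tr \ra_\nabla$, so the identity is immediate once \eqref{eq:htHn} is granted.

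Next I would derive the reduced relation \eqref{eq:whQ=whS}. Deleting the first and last rows of the vector identity $\QQ_n \c= \SS_n + \Ab_{n-1}\SS_{n-1}$ from \eqref{eq:Q-S-Laguerre} replaces $\QQ_n, \SS_n$ by their interior subvectors $\wh{\QQ}_n, \wh{\SS}_n$ and replaces $\Ab_{n-1}$ by its central block $\wt{\Ab}_{n-1}$; by \eqref{eq:htAn} this block is exactly $[\eb_1 \,|\, \wh{\Ab}_{n-1} \,|\, \eb_{n-1}]$, which acts on the full vector $\SS_{n-1}$ to give the stated formula. The iteration rules $\wh{\Ab}_n = \wh{\Cb}_n (\wh{\Hb}_n^\nabla)^{-1}$ and $\wh{\Hb}_n^\nabla = \wh{\Db}_n - \wh{\Cb}_{n-1}\wh{\Ab}_{n-1}^\tr$ are then quoted directly from \eqref{eq:htAn} and the final display preceding the proposition, while the base value $\wh{\Ab}_1 = 0$ is forced by the size convention: $\wh{\Ab}_n$ is $n \times (n-1)$, so $\wh{\Ab}_1$ is the empty $1 \times 0$ matrix and $\wh{\Hb}_2^\nabla = \wh{\Db}_2$.

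The step I expect to require the most care is justifying the block-diagonal form \eqref{eq:htHn}, i.e. the decoupling itself; although established before the proposition, it is the only nonformal ingredient. The key point is that $\Cb_{n-1}$ has zero first and last rows by \eqref{eq:matrixC}, while $\Ab_{n-1}$ has zero first and last rows by the structure in Theorem \ref{thm:Q=S}, so in $\Hb_n^\nabla = \Db_n - \Cb_{n-1}\Ab_{n-1}^\tr$ the correction term vanishes along the boundary rows and columns, leaving the corner entries $d_0^n, d_n^n$ of the diagonal matrix $\Db_n$ untouched. It is the identities $c_{1,0}^n = d_0^n = n^2 h_0^{n-1}$ and $c_{n,n}^n = d_n^n = n^2 h_{n-1}^{n-1}$ that pin the first and last columns of $\wt{\Ab}_n$ to be $\eb_1$ and $\eb_n$. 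Once this decoupling is in hand, the three claims assemble into the proposition with no further computation.
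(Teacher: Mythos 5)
Your proposal is correct and follows essentially the same route as the paper: the paper's ``proof'' of this proposition is precisely the chain of computations preceding it (the block-diagonal form \eqref{eq:htHn} of $\Hb_n^\nabla$ coming from the zero boundary rows of $\Cb_{n-1}$ and $\Ab_{n-1}$, the identities $c_{1,0}^n = d_0^n$ and $c_{n,n}^n = d_n^n$ forcing $\wt{\Ab}_n = [\eb_1\,|\,\wh{\Ab}_n\,|\,\eb_n]$, and the resulting reduced recursion), and you have reconstructed exactly these steps, correctly isolating the corner identities as the key nonformal ingredient. The only cosmetic difference is that you present it as three labelled claims extracted from \eqref{eq:Q-S-Laguerre}, \eqref{eq:htHn} and \eqref{eq:htAn}, while the paper develops the same material linearly and then ``sums up.''
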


\begin{exam}
In the case of $\a = \b =0$, the iterative algorithm gives
\begin{align*}
\wh \Ab_2 & = \left[\begin{matrix} 1 \\ 1 \end{matrix}\right], \quad
\wh \Hb_2 =\left[\begin{matrix} 2 \end{matrix}\right], \\
\wh \Ab_3 & = \f 1 4\left[\begin{matrix} 5 & 1 \\ 5 & 5 \\ 1 & 5 \end{matrix}\right], \quad
\wh \Hb_3 =\left[\begin{matrix} 10 & -2 \\ -2 & 10 \end{matrix}\right], \\
\wh \Ab_4 & = \f 1{56}\left[\begin{matrix} 90 & 24 & 6 \\ 53 & 72 &11 \\ 11 & 72 & 53 \\
    6 & 24 & 90  \end{matrix}\right], \quad
\wh \Hb_4 = \left[\begin{matrix} 93 & -12 & -3 \\ -12 & 48 & -12 \\ -3 & -12 & 93 \end{matrix}\right].
\end{align*}
\end{exam}

Once the matrices $\wh \Ab_n$ are determined, the relation \eqref{eq:whQ=whS} can be used to
determine the Sobolev orthogonal polynomials $\SS_n$ iteratively, since
$$
   \wh \SS_n \c=  \wh \QQ_n - Q_0^{n-1} \eb_1 -   Q_{n-1}^{n-1} \eb_{n-1} - \wh{\Ab}_{n-1} \wh \SS_{n-1},
$$
where we have used $S_0^{n-1} = Q_0^{n-1}$ and $S_{n-1}^{n-1} = Q_{n-1}^{n-1}$.

We could also determine the polynomials $S_k^n$ directly by solving a linear system of equations. For this
purpose, we fix $k$, $1\leqslant k\leqslant n-1$, write
\begin{equation}\label{eq:Skn2=Qkn2}
     S^n_k(x,y) \c= Q^n_k(x,y)+\sum_{j=1}^{n-1} \sum_{i=0}^j a_i^j \, Q_i^j(x,y)
\end{equation}
and determine the coefficient $a_i^j$ by the orthogonality $\la S_k^n, Q_j^m \ra_\nabla =0$ for
$0 \leqslant l \leqslant m \leqslant n-1$, which is equivalent to the linear system of equations
$$
\sum_{j=1}^{n-1} \sum_{i=0}^j a_i^j \, \la  Q_i^j,  Q^m_l \ra_\nabla
    = - \la   Q_k^n,   Q^m_l \ra_\nabla, \quad  0 \leqslant l \leqslant m \leqslant n-1.
$$
By Lemma \ref{lem:Qoc}, these equations become
\begin{align*}
& l(m-l)^2 h_{l-1}^{m-2}\, a_{l-1}^{m-1} + l^2(m-l) h_{l-1}^{m-2}\, a_l^{m-1} \\
&\qquad +\left[(m-l)^2 h_l^{m-1}+2(m-l)^2 l^2 h_{l-1}^{m-2}+l^2 h_{l-1}^{m-1}\right]\, a_l^m \\
&\qquad + l^2(m-l+1)h_{l-1}^{m-1}\, a_l^{m+1}+(l+1)(m-l)^2h_l^{m-1} \, a_{l+1}^{m+1}\\
&=-[(l+1)(m-l)^2h_l^{m-1} \d_{k-1,l}+l^2(m+1-l)h_{l-1}^{m-1}\d_{k,l}]\d_{m,n-1}.
\end{align*}
Observe that for $m=n-1$ the third term in the left hand side does not appear since $a_{l}^{n} = 0$ by definition.
Using $h_{l-1}^{m-1}=(m-l)(\a +m-l) h_{l-1}^{m-2}$ and $h_l^{m-1}=l(\b +l) h_{l-1}^{m-2}$, the above
equations can be simplified to
\begin{align}\label{eq:rr}
 &(m-l)\, a_{l-1}^{m-1} + l\, a_l^{m-1} +\left[l \a + (m-l)\b +4l(m-l) \right]\, a_l^m \\
& \qquad + l(m-l+1)(\a +m-l) \, a_l^{m+1}+(l+1)(m-l) (\b +l) \, a_{l+1}^{m+1} \notag \\
 &= -[(l+1)(m-l) (\b + l) \d_{k,l+1}+l(m+1-l)(\a +m-l) \d_{k,l}]\d_{m,n-1}. \notag
\end{align}
The indexes of $a_l^m$ are lattices in $\Lambda_n: = \{(l,m): 0 \leqslant l \leqslant m \leqslant n-1\}$. For each $(l,m)$, the
equation \eqref{eq:rr} involves $a_l^m$ and its four neighbors, directly above and below, left
and right of $a_l^m$ in the lattice. In particular, for $l=0$ and $l=m$ we obtain the equations
\begin{align*}
 a_0^m +  a_1^{m+1}  =  - \d_{k,1} \,\d_{m,n-1},\qquad
 a_m^m +  a_m^{m+1}  =  - \d_{k,m} \, \d_{m,n-1}.
\end{align*}
By $a_{l}^{n}=0$, these equations can be written in an equivalent way as
\begin{align} \label{eq:ic}
\begin{split}
 & a_0^{n-1}  = -\d_{k,1} \qquad  a_{n-1}^{n-1} = -\d_{k,n-1} \quad  a_m^m +  a_m^{m+1}  =  0\\
 & a_0^m +  a_1^{m+1} = 0  , \qquad 1\leqslant m \leqslant n-2.
 \end{split}
\end{align}
These provide the boundary relations for the lattice $\Lambda_n$. Together, \eqref{eq:rr} and \eqref{eq:ic}
form a linear system of equations that can be solved for $\{a_l^m: 0\leqslant l\leqslant m \leqslant n-1\}$.  Furthermore,
the relations in \eqref{eq:ic} allow us to combine some of the terms in the sum \eqref{eq:Skn2=Qkn2}.
We summarize the above consideration into the following proposition.

\begin{prop}
For $1\leqslant k \leqslant n-1$, the monic Sobolev polynomials are given by
\begin{align*}
S^n_k  \c= & Q_k^n-\d_{k,1}\, Q_0^{n-1}-\d_{n,n-1}\, Q_{n-1}^{n-1}+ \sum_{j=1}^{n-2}a_0^j\,(Q_0^j-Q_1^{j+1}) \\
 & +   \sum_{j=1}^{n-2}a_j^j\,(Q_j^j-Q_j^{j+1})+  \sum_{j=4}^{n-1}\sum_{i=4}^j a_{i-2}^j \,Q_{i-2}^j
\end{align*}
where the coefficients $a_i^j$ are solutions of \eqref{eq:rr} and \eqref{eq:ic}.
\end{prop}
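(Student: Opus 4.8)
The plan is entirely algebraic: no new orthogonality relation has to be established, since the orthogonality has already been encoded in the linear system \eqref{eq:rr}--\eqref{eq:ic}. Starting from the expansion \eqref{eq:Skn2=Qkn2},
$$
  S_k^n \c= Q_k^n + \sum_{j=1}^{n-1}\sum_{i=0}^{j} a_i^j\, Q_i^j ,
$$
with the coefficients $a_i^j$ solving \eqref{eq:rr}--\eqref{eq:ic}, the whole task is to rewrite this finite double sum over the triangular index set $\Lambda_n$ in the more economical form claimed, using the boundary identities in \eqref{eq:ic} to fuse pairs of neighbouring terms.

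The organizing idea is to split $\Lambda_n$ into its boundary and its deep interior. First I would peel off the two top-row corner entries: by the inhomogeneous conditions $a_0^{n-1} = -\d_{k,1}$ and $a_{n-1}^{n-1} = -\d_{k,n-1}$ these contribute exactly $-\d_{k,1}\,Q_0^{n-1}$ and $-\d_{k,n-1}\,Q_{n-1}^{n-1}$. The deep interior $\{(i,j): 2 \leqslant i \leqslant j-2\}$, which is nonempty only for $j \geqslant 4$, is carried over unchanged and, after the shift $i\mapsto i-2$, becomes the last double sum $\sum_{j=4}^{n-1}\sum_{i=4}^{j} a_{i-2}^j\,Q_{i-2}^j$.

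What remains are the near-boundary diagonals, and here the two homogeneous families in \eqref{eq:ic} do the work. The relation $a_1^{j+1} = -a_0^{j}$ fuses the leftmost-column entry at level $j$ with the second-column entry one level up, $a_0^j Q_0^j + a_1^{j+1}Q_1^{j+1} = a_0^j\,(Q_0^j - Q_1^{j+1})$; summing over $1 \leqslant j \leqslant n-2$ produces the first single sum. Symmetrically, $a_j^{j+1} = -a_j^{j}$ fuses the diagonal entry at level $j$ with the subdiagonal entry one level up, $a_j^j Q_j^j + a_j^{j+1}Q_j^{j+1} = a_j^j\,(Q_j^j - Q_j^{j+1})$, giving the second single sum. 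I would then conclude by matching the coefficient of each basis polynomial $Q_i^j$ on the two sides.

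The step I expect to require the most care is the bookkeeping at the apex of the triangle, where the two pairing families collide. The second-column diagonal ($i=1$) and the subdiagonal ($i=j-1$) coincide at the single entry $(i,j)=(1,2)$, so a naive grouping routes that entry into \emph{both} fused families and risks accounting for $Q_1^2$ twice; the compatibility $a_0^1 = a_1^1$ forced by \eqref{eq:ic} at $m=1$ is precisely what must be tracked there. I would pin this down by treating the low-order cases $n = 2$ and $n = 3$ separately, where the interior sum is empty and the overlap of the two families is most visible, and only then assert the general collection of terms, so that each lattice point of $\Lambda_n$ is represented exactly once and with the correct coefficient.
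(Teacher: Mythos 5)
Your strategy is the same as the paper's: the proposition there is obtained by nothing more than regrouping the expansion \eqref{eq:Skn2=Qkn2} through the boundary relations \eqref{eq:ic}, exactly as you propose, and you have correctly isolated the one delicate point, namely the lattice entry $(i,j)=(1,2)$ where the column-zero/column-one pairing and the diagonal/subdiagonal pairing collide. The gap is in how you propose to dispose of that point. The compatibility $a_0^1=a_1^1$ does \emph{not} rescue the displayed formula: it only makes the two spurious contributions equal, it does not remove one of them. As printed, the formula routes $(1,2)$ into both fused sums, so the coefficient it assigns to $Q_1^2$ is $-a_0^1-a_1^1=2a_1^2$ rather than $a_1^2$; hence its right-hand side differs from $S_k^n$ by $a_1^2\,Q_1^2$, which is nonzero in general, and no amount of bookkeeping can prove the statement literally as written.

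Carrying out the low-order check you defer to makes this concrete, and in fact refutes the printed statement. For $\a=\b=0$, $n=3$, $k=1$, matching coefficients in \eqref{eq:Skn2=Qkn2} against the paper's own example $S_1^3=x^2y-x^2-3xy+3x+y$ gives $a_0^1=a_1^1=1$, $a_1^2=-1$, $a_0^2=-1$, $a_2^2=0$, consistent with \eqref{eq:ic}, whereas the proposition's right-hand side evaluates to
\begin{equation*}
Q_1^3-Q_0^2+\bigl(Q_0^1-Q_1^2\bigr)+\bigl(Q_1^1-Q_1^2\bigr)
=x^2y-x^2-4xy+3x+y,
\end{equation*}
which is off by exactly $a_1^2Q_1^2=-xy$ and is indeed not $\la\cdot,\cdot\ra_\nabla$-orthogonal to $y$. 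So the ``risk'' you flagged is realized: the statement itself must be amended (note also that $\d_{n,n-1}$ has to be read as $\d_{k,n-1}$), for instance by assigning $(1,2)$ to exactly one family, i.e.\ replacing the $j=1$ summand $a_1^1\bigl(Q_1^1-Q_1^2\bigr)$ of the second single sum by $a_1^1Q_1^1$ (for $n=2$ both sums are empty, so nothing changes there). With that correction your coefficient-matching argument closes, since every point of $\Lambda_n$ is then represented exactly once, with coefficient either prescribed by \eqref{eq:ic} or carried over verbatim; as proposed, however, your proof cannot terminate, because its target is false whenever $a_1^2\neq0$.
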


\begin{exam}
For the case of $\a=\b=0$, the monic Laguerre--Sobolev orthogonal polynomials satisfy the relation
$$
   S_{n-k}^n (x,y) = S_k^n(y,x), \qquad 0 \leqslant k \leqslant n.
$$
The following are these polynomials in lower degrees: $S_0^1(x,y) = x$,
\begin{align*}
  &S_0^2(x,y) = x(x-2), \quad  S_1^2(x,y) = x y -x-y,  \\
  &S_0^3(x,y) = x(x^2-6x +6), \quad S_1^3(x,y) = x^2 y - x^2 - 3 x y +3 x +y.
\end{align*}
\end{exam}

\begin{rem}
In the case of $\a=\b=0$ we have (see equation (5.2.1) in \cite{Szego})
$$
q_n(u_0;x) = (-1)^n \frac{1}{n!} L_n^{-1}(x) = (-1)^{n-1} \frac{1}{(n-1)!} x L_{n-1}^{1}(x),
$$
and therefore the constant term in $q_n(u_0;x)$ always vanishes for $n \geqslant 1$. Consequently,
in this case, equations that hold under \textit{modulo constant}, or $\c=$, in Theorem \ref{thm:Q=S} can
be replaced by the usual equal sign. 
\end{rem}

\section{The product Gegenbauer weight}
\setcounter{equation}{0}

In this section we study the product of Gegenbauer (or ultraspherical) weight functions
and the inner product \eqref{eq:Gegenbauer}. Let
$$
u_{\a}(x) := (1 - x^2)^{\a - \frac{1}{2}}, \qquad \a > -\tfrac{1}{2}.
$$
The classical Gegenbauer polynomials $C_n^\l$, defined by (\cite[Chapt IV]{Szego})
$$
C_n^{\a}(x) := \binom{n+2\a-1}{n}{}_2F_1(-n,n+2\a;\a+\frac{1}{2};x) = 2^n\binom{n+\a-1}{n} \,x^n + \cdots
$$
are orthogonal with respect to the inner product
$$
  \la f, g \ra_{u_\a} := \frac{\Gamma(\a+1)}{\Gamma(\a+1/2)\Gamma(1/2)}\int_{-1}^{1} f(x)g(x)\,u_\a(x) dx.
$$
More precisely, they satisfy
\begin{align*}
\la C_n^{\a}, C_m^{\a} \ra_{u_\a} = \frac{2^{1-2\a}\,\a\,\sqrt{\pi}}{\Gamma(\a+1/2)\,\Gamma(\a)}\,\frac{\Gamma(n+2\a)}{(n+\a)\,n!} \d_{n,m}.
\end{align*}
Moreover, they are self-coherent since they satisfy (\cite[(4.7.29) in p. 83]{Szego})
$$
2\,(n+\a) \,C_n^{\a}(x) = \frac{d}{dx}\, \left[C_{n+1}^{\a}(x) - C_{n-1}^{\a}(x)\right],\quad n\ge1.
$$
Monic Gegenbauer orthogonal polynomials $p_n(u_\a)$ are defined by
$$
p_n(u_\a; x) := 2^{-n} \,\binom{n+\a-1}{n}^{-1}\, C_n^{\a}(x),$$
and their $L^2$ norms are given by
$$ h_n^\a: = \la  p_n(u_\a), p_n(u_\a)\ra_{u_\a} = \frac{2^{1-2\a-2n}\,\sqrt{\pi}\,n!\,\Gamma(\a+1)\,
\Gamma(n+2\a)}{\Gamma(\a+1/2)\,\Gamma(n+\a)\,\Gamma(n+\a+1)}.
$$
From these relations, we deduce that the polynomial
$$
  q_n(u_\a; x):=  p_n(u_\a; x) + n\,b_{n-1}(\alpha)\, p_{n-2}(u_\a; x),
$$
where
$$
b_{n-1}(\alpha) = - \frac{(n-1)}{4\,(n+\a-1)\,(n+\a-2)}, \quad n\ge 2,
$$
satisfies $q_n'(u_\a; x) = n \, p_{n-1}(u_\a;x)$ for $n =1,2, \ldots$

We define the product Gegenbauer weight function $U_{\a,\b}(x,y):= u_\a(x) y_\b(y)$ on $[-1,1]\times [-1,1]$
for $\a,\b > -1/2$ and define monic product polynomials
$$
 P_k^n(x,y) := p_{n-k}(u_\a; x) \, p_k(u_\b; y),   \qquad 0 \leqslant k \leqslant n.
$$
These are mutually orthogonal polynomials, and
\begin{equation}\label{eq:hkn-Gegenbauer}
  h_k^n: = \la P_k^n, P_k^n \ra_{U_{\a,\b}}  =   h^{\a}_{n-k}\,h^{\b}_{k}.
\end{equation}
We also define the monic polynomial $Q_k^n$ by
$$
 Q_k^n(x,y) := q_{n-k}(u_\a; x) \,q_k(u_\b; y),   \qquad 0 \leqslant k \leqslant n.
$$
In this setting, their partial derivatives for $1 \leqslant k \leqslant n$ in Lemma \ref{lem:Qkn} become

\begin{lem} \label{lem:Q-Gegenbauer}
For $1\leqslant k\leqslant n-1$, 
\begin{align*}
\partial_1 \, Q_k^n(x,y) & = (n-k) \left[P_k^{n-1}(x,y) + k\,b_{k-1}(\beta)\, P_{k-2}^{n-3}(x,y)\right],  \\
\partial_2 \, Q_k^n(x,y) & = k \left[P_{k-1}^{n-1}(x,y) + (n-k)\,b_{n-k-1}(\alpha) \,P_{k-1}^{n-3}(x,y)\right].
\end{align*}
\end{lem}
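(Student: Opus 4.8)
The plan is to derive both identities directly, in exactly the manner of the proof of Lemma~\ref{lem:Qkn}, by observing that the Gegenbauer case is precisely the \emph{symmetric} self-coherent specialization, i.e. the case of \eqref{eq:q_n} in which all the coefficients $a_n$ vanish. Concretely, here $q_n(u_\a;x) = p_n(u_\a;x) + n\,b_{n-1}(\alpha)\,p_{n-2}(u_\a;x)$, and the single key input is the already-established derivative formula $q_n'(u_\a;x) = n\,p_{n-1}(u_\a;x)$ (and likewise for $u_\b$).

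First I would compute $\partial_1 Q_k^n$ by differentiating only the $x$-factor of $Q_k^n(x,y) = q_{n-k}(u_\a;x)\,q_k(u_\b;y)$, obtaining $\partial_1 Q_k^n = q_{n-k}'(u_\a;x)\,q_k(u_\b;y) = (n-k)\,p_{n-k-1}(u_\a;x)\,q_k(u_\b;y)$. Then I would substitute the definition $q_k(u_\b;y) = p_k(u_\b;y) + k\,b_{k-1}(\beta)\,p_{k-2}(u_\b;y)$ and expand the product. The only remaining step is index bookkeeping against $P_j^m(x,y) = p_{m-j}(u_\a;x)\,p_j(u_\b;y)$: one recognizes $p_{n-k-1}(u_\a;x)\,p_k(u_\b;y) = P_k^{n-1}(x,y)$ and $p_{n-k-1}(u_\a;x)\,p_{k-2}(u_\b;y) = P_{k-2}^{n-3}(x,y)$, which yields the stated formula for $\partial_1 Q_k^n$.

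The identity for $\partial_2 Q_k^n$ follows in the same way, now differentiating the $y$-factor: $\partial_2 Q_k^n = k\,q_{n-k}(u_\a;x)\,p_{k-1}(u_\b;y)$, after which I substitute $q_{n-k}(u_\a;x) = p_{n-k}(u_\a;x) + (n-k)\,b_{n-k-1}(\alpha)\,p_{n-k-2}(u_\a;x)$; the degree matches $p_{n-k}(u_\a;x)\,p_{k-1}(u_\b;y) = P_{k-1}^{n-1}$ and $p_{n-k-2}(u_\a;x)\,p_{k-1}(u_\b;y) = P_{k-1}^{n-3}$ give the second formula. I do not anticipate any genuine obstacle: the entire content is the vanishing of the $a_n$-terms from the symmetric coherence together with careful tracking of the degree shifts, so the only care needed is to verify that the superscripts $n-1$ and $n-3$ emerge correctly from the index arithmetic.
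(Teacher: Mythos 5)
Your proof is correct and follows the paper's approach exactly: the paper obtains this lemma as the specialization of Lemma~\ref{lem:Qkn} (whose proof is precisely your computation --- differentiate one factor of $Q_k^n$, apply $q_m' = m\,p_{m-1}$, substitute the definition \eqref{eq:q_n} of the other factor, and match indices against $P_j^m$) to the symmetric self-coherent Gegenbauer case where the $a_n$-terms vanish. Your index arithmetic for the superscripts $n-1$ and $n-3$ checks out in both identities.
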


Denote by $\CV_n^2(\nabla, U_{\a,\b})$, $n \ge 1$, the space of Sobolev orthogonal polynomials
with respect to the bilinear form $\la \cdot, \cdot \ra_\nabla$ defined in \eqref{eq:ipd-nabla}, and
let $S_k^n = x^{n-k} y^k + \cdots$ be
the monic orthogonal polynomials in $\CV_n^2(\nabla, U_{\a,\b})$. In this case, relation \eqref{eq:Qn=Sn}
becomes
\begin{equation}\label{eq:Q-S-Gegenbauer}
     \QQ_n \c= \SS_n + \Bb_{n-2} \SS_{n-2}.
\end{equation}
To compute $\Bb_{n-2}$ explicitly, we need explicit formulas for
the inner products of the gradients of the polynomials $Q_k^n$. In order to simplify the expressions,
from now on we will write $\la \cdot,\cdot\ra = \la \cdot,\cdot\ra_{U_{\a,\b}}$.


\begin{lem}\label{lem:Qoc-G} For  $0\leqslant i\leqslant n$ and $0\leqslant l \leqslant m$,
\begin{align*}
\la  Q^n_i, Q^m_l \ra_\nabla
  =& \d_{n,m+2}  \left[ (m-l)^2(l+2)\, b_{l+1}(\beta) \, h^{m-1}_{l} \, \d_{i,l+2} \right. \\
  &  \qquad \left. + l^2 (m-l+2)\, b_{m-l+1}(\alpha) \, h^{m-1}_{l-1}  \, \d_{i,l} \right] \\
  & + \d_{n,m} \left[(m-l)^2 \, h^{m-1}_{l} \, \d_{i,l} + l^2 (m-l)^2\,b^2_{l-1}(\beta) \, h^{m-3}_{l-2} \, \d_{i,l} \right.  \\
  & \qquad \left. + l^2 h^{m-1}_{l-1} \, \d_{i,l} + l^2 (m-l)^2\,b^2_{m-l-1}(\alpha) \, h^{m-3}_{l-1} \, \d_{i,l} \right]  \\
  & + \d_{n,m-2} \left[ l (m-l)^2\, b_{l-1}(\beta) \, h^{m-3}_{l-2} \, \d_{i,l-2} \right.  \\
  & \qquad \left. + l^2 (m-l)\, b_{m-l-1}(\alpha)\, h^{m-3}_{l-1} \, \d_{i,l} \right].  \
\end{align*}
In particular,
\begin{align*}
\la Q^n_0,  Q^m_l \ra_\nabla &= 2 (m-2)^2\, b_1(\beta) \, h^{m-1}_{0} \, \d_{l,2} \, \d_{n,m-2} + m^2 \, h^{m-1}_{0} \, \d_{l,0}\, \d_{n,m}.  \\
\la Q^n_n, Q^m_l \ra_\nabla &= 2(m-2)^2\,  b_1(\alpha) \, h_{m-1}^{m-1}\,\d_{l,n}\, \d_{n,m-2} + m^2 \, h^{m-1}_{m-1} \, \d_{l,n}\, \d_{n,m}.
\end{align*}
\end{lem}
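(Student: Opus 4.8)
The plan is to mimic the proof of Lemma~\ref{lem:Qoc}, expanding the gradient inner product by its definition,
$$
\la Q_i^n, Q_l^m\ra_\nabla = \la \partial_1 Q_i^n, \partial_1 Q_l^m\ra + \la \partial_2 Q_i^n, \partial_2 Q_l^m\ra,
$$
and then substituting the explicit two-term expressions for the partial derivatives supplied by Lemma~\ref{lem:Q-Gegenbauer} in the generic range $1\le k\le n-1$ (and by Lemma~\ref{lem:Qkn} in the boundary cases $k\in\{0,n\}$). Each of the two inner products on the right then becomes a product of two binomials in the $P_j^r$, which I would multiply out into four cross terms, so that $\la Q_i^n, Q_l^m\ra_\nabla$ becomes a sum of eight inner products of the form $\la P_a^b, P_c^d\ra$.

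The engine of the computation is the orthogonality relation \eqref{eq:hkn-Gegenbauer}, $\la P_a^b, P_c^d\ra = h_a^b\,\d_{b,d}\,\d_{a,c}$, which requires the two product polynomials to agree in both partial degrees. Applied term by term, it collapses each of the eight contributions to a single monomial carrying a pair of Kronecker deltas. The key observation is that, because the two summands of each derivative carry upper indices differing by $-1$ and $-3$ (for instance $P_l^{m-1}$ versus $P_{l-2}^{m-3}$ in $\partial_1 Q_l^m$), the degree-matching condition $b=d$ forces exactly one of the three regimes $n=m+2$, $n=m$, or $n=m-2$; these produce the three bracketed groups $\d_{n,m+2}$, $\d_{n,m}$, $\d_{n,m-2}$ in the statement. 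In each regime I would then use the surviving constraints (for example $n=m$ and $i=l$ on the diagonal, or $n=m+2$ and $i=l+2$ off it) to rewrite every factor $(n-i)$, $b_{n-i-1}(\a)$, $h_i^{n-1}$, and so on, purely in terms of $m$ and $l$, matching the asserted coefficients.

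I expect the only real difficulty to be the bookkeeping: keeping straight which of $\d_{i,l}$, $\d_{i,l+2}$, or $\d_{i,l-2}$ survives in a given cross term, and correctly translating the $\a$-indexed and $\b$-indexed $b$-coefficients and the norms $h$ under the delta constraints. This is purely mechanical once the eight terms are laid out, exactly as in the worked example inside the proof of Lemma~\ref{lem:Qoc}; the Gegenbauer case differs only in that the self-coherence relation \eqref{eq:symm-coherent} contributes a $p_{n-2}$ term (hence the $-3$ shift and the $\d_{n,m\pm2}$ couplings) rather than the $p_{n-1}$ term of the Laguerre case.

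Finally, the two particular formulas for $\la Q_0^n, Q_l^m\ra_\nabla$ and $\la Q_n^n, Q_l^m\ra_\nabla$ follow most cleanly not by specializing the general identity but directly from Lemma~\ref{lem:Qkn}: since $\partial_2 Q_0^n = 0$ and $\partial_1 Q_n^n = 0$, one of the two gradient inner products vanishes outright, leaving a single binomial to pair against $\partial_1 Q_l^m$ (respectively $\partial_2 Q_l^m$), and orthogonality \eqref{eq:hkn-Gegenbauer} then isolates the stated $\d_{l,2}\d_{n,m-2}$ and $\d_{l,0}\d_{n,m}$ terms at once.
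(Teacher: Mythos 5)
Your proposal is correct and is essentially identical to the paper's own argument: the paper simply declares the proof ``analogous to that of Lemma~\ref{lem:Qoc}'', which is exactly what you carry out --- expand $\la Q_i^n,Q_l^m\ra_\nabla$ into the two $\la\partial_j Q_i^n,\partial_j Q_l^m\ra$ terms, substitute the two-term derivative formulas of Lemma~\ref{lem:Q-Gegenbauer} (and Lemma~\ref{lem:Qkn} for $k\in\{0,n\}$), and collapse the resulting cross terms by the orthogonality \eqref{eq:hkn-Gegenbauer}, the $-1$/$-3$ shift in upper indices producing the three regimes $\d_{n,m+2}$, $\d_{n,m}$, $\d_{n,m-2}$. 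Your handling of the boundary cases via $\partial_2 Q_0^n=0$ and $\partial_1 Q_n^n=0$ matches the paper's treatment in the Laguerre model as well, so there is nothing to add.
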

The proof is analogous to that of Lemma \ref{lem:Qoc}.

\begin{cor}\label{cor:Qni-Qml-G}
For $0\leqslant i \leqslant n$, $0\leqslant l\leqslant m$, and $m\leqslant n-1$ it holds
\begin{align*}
\la  Q^n_i, Q^m_l \ra_\nabla
  =\, & \d_{n,m+2}  \left[ (m-l)^2(l+2)\, b_{l+1}(\beta) \, h^{m-1}_{l} \, \d_{i,l+2} \right. \\
  &  \qquad \left.+ l^2 (m-l+2)\, b_{m-l+1}(\alpha) \, h^{m-1}_{l-1}  \, \d_{i,l} \right].
\end{align*}
In particular, \begin{align*}
\la Q^n_0,  Q^m_l \ra_\nabla &= 0 \quad \hbox{and} \quad \la  Q^n_n, Q^m_l \ra_ \nabla = 0, \qquad m<n.
\end{align*}
\end{cor}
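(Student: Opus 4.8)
The plan is to deduce this corollary directly from Lemma~\ref{lem:Qoc-G} by imposing the hypothesis $m \leqslant n-1$, equivalently $n > m$. The full expression in Lemma~\ref{lem:Qoc-G} splits into three groups of terms, carrying the Kronecker factors $\d_{n,m+2}$, $\d_{n,m}$, and $\d_{n,m-2}$, respectively. Since $n > m$, the factor $\d_{n,m}$ would force $n = m$ and hence vanishes, while $\d_{n,m-2}$ would force $n = m-2 < m$ and likewise vanishes. Only the group carrying $\d_{n,m+2}$ can survive, and it does so precisely when $n = m+2$, which is compatible with $m \leqslant n-1$. Retaining this group alone reproduces the claimed formula.

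For the two boundary identities I would substitute $i = 0$ and $i = n$ into the surviving expression and verify that both terms collapse. When $i = 0$, the first term requires $\d_{0,l+2}$, i.e. $l = -2$, which is impossible, while the second requires $\d_{0,l}$, forcing $l = 0$; but then its prefactor $l^2$ annihilates it. Hence $\la Q^n_0, Q^m_l \ra_\nabla = 0$ for $m < n$. When $i = n$ and $m < n$, the surviving $\d_{n,m+2}$ group is nonzero only if $n = m+2$; the first term then requires $\d_{n,l+2}$, i.e. $l = n-2 = m$, but its prefactor $(m-l)^2$ vanishes, and the second requires $\d_{n,l}$, i.e. $l = n > m$, which is incompatible with $l \leqslant m$. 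Thus $\la Q^n_n, Q^m_l \ra_\nabla = 0$ for $m < n$ as well.

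The argument is purely a matter of bookkeeping once Lemma~\ref{lem:Qoc-G} is available, so I do not anticipate a genuine obstacle; the only point demanding care is the two boundary cases, where a surviving Kronecker factor must be paired against a prefactor that vanishes rather than against an impossible index constraint. This mirrors exactly the Laguerre situation of Corollary~\ref{cor:Qni-Qml}, where the same hypothesis $n > m$ eliminated the $\d_{n,m-1}$ and $\d_{n,m}$ groups of Lemma~\ref{lem:Qoc} and left only the $\d_{n,m+1}$ contribution.
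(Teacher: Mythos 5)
Your proposal is correct and coincides with the paper's (implicit) argument: the corollary is stated as an immediate consequence of Lemma~\ref{lem:Qoc-G}, obtained exactly as you do by noting that $m\leqslant n-1$ kills the $\d_{n,m}$ and $\d_{n,m-2}$ groups, leaving only the $\d_{n,m+2}$ contribution. Your careful treatment of the boundary cases $i=0$ and $i=n$ — where survival of a Kronecker factor is ruled out either by an impossible index or by a vanishing prefactor such as $l^2$ or $(m-l)^2$ — is precisely the bookkeeping the paper leaves to the reader.
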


To determine the matrix $\Bb_{n-2}$, we will need explicit forms of the following two matrices:
$$
  \Cb_n: = \la \QQ_{n+2},\QQ_n^\tr \ra_{\nabla} \quad \hbox{and}\quad \Db_n: = \la \QQ_n,\QQ_n^\tr \ra_{\nabla}.
$$

\begin{lem}
For $n =0,1,2,\ldots$, $\Db_n$ is a diagonal matrix
\begin{equation}
\Db_n= \mathrm{diag}\{d_0^n, d_1^n, \ldots, d_n^n\},
\end{equation}
where, for $0 \leqslant j \leqslant n$,
\begin{align*}
 d_j^n  &= (n-j)^2 \, h_j ^{n-1} + j^2 (n-j)^2\, b^2_{j-1}(\beta) \, h_{j-2}^{n-3} \\
        &\quad + j^2 \, h_{j-1}^{n-1} + j^2 (n-j)^2\, b^2_{n-j-1}(\alpha) \, h_{j-1}^{n-3},
\end{align*}
with $h_j^m$ as given in \eqref{eq:hkn-Gegenbauer}, and $\Cb_n: (n+3) \times (n+1)$ is a bidiagonal matrix,
\begin{equation}\label{eq:matrixC-G}
\Cb_n =
 \left[ \begin{matrix}
 0 & 0 & 0 & \cdots & 0 \\
0 & c_{1,1}^n & 0 &  & \\
c_{2,0}^n & 0 & c_{2,2}^n &  &  \\
   & \ddots & \ddots & \ddots &    \\
     &   &  & 0 &  c_{n,n}^n   \\
   &   &  & c_{n+1,n-1}^n &  0   \\
 0 &  \cdots & & 0 & 0
\end{matrix}
\right],
\end{equation}
where
\begin{align*}
c_{l,l}^n &= \la Q^{n+2}_l, Q^n_l \ra_{\nabla} =
 l^2 (n-l+2)\, b_{n-l+1}(\a)\, h_{l-1}^{n-1}, \qquad  0 \leqslant l \leqslant n\\
c_{l+2,l}^n &= \la Q^{n+2}_{l+2}, Q^n_l \ra_{\nabla} =
 (l+2) (n-l)^2\, b_{l+1}(\b)\, h_l^{n-1}, \qquad 0\leqslant l \leqslant n.
\end{align*}
\end{lem}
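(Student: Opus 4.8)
The plan is to obtain both matrices by specializing Lemma~\ref{lem:Qoc-G} and Corollary~\ref{cor:Qni-Qml-G}, exactly as in the Laguerre case. I would begin with $\Db_n=\la\QQ_n,\QQ_n^\tr\ra_\nabla$, whose entry in row $i$ and column $l$ is $\la Q_i^n,Q_l^n\ra_\nabla$. Taking both degrees equal to $n$ in Lemma~\ref{lem:Qoc-G}, the factors $\d_{n,m+2}$ and $\d_{n,m-2}$ vanish, leaving only the middle bracket; and since every summand there carries a factor $\d_{i,l}$, all off-diagonal entries vanish. Thus $\Db_n$ is diagonal, and setting $i=l=j$ reads off the four terms of the stated $d_j^n=(n-j)^2h_j^{n-1}+j^2(n-j)^2b_{j-1}^2(\b)h_{j-2}^{n-3}+j^2h_{j-1}^{n-1}+j^2(n-j)^2b_{n-j-1}^2(\a)h_{j-1}^{n-3}$.

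For $\Cb_n=\la\QQ_{n+2},\QQ_n^\tr\ra_\nabla$, the entry in row $i$ and column $l$ is $\la Q_i^{n+2},Q_l^n\ra_\nabla$, and here I would invoke Corollary~\ref{cor:Qni-Qml-G} with first degree $n+2$ and second degree $n$ (which is $\le(n+2)-1$). Only the $\d_{n+2,n+2}$ bracket survives, and it contains exactly the two row-deltas $\d_{i,l+2}$ and $\d_{i,l}$. Hence the nonzero entries lie solely on the main diagonal ($i=l$) and the second subdiagonal ($i=l+2$); reading off their coefficients with $m=n$ gives $c_{l,l}^n=l^2(n-l+2)\,b_{n-l+1}(\a)\,h_{l-1}^{n-1}$ and $c_{l+2,l}^n=(l+2)(n-l)^2\,b_{l+1}(\b)\,h_l^{n-1}$, as claimed.

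It remains to record the shape of $\Cb_n$. Since $\QQ_{n+2}$ has $n+3$ components and $\QQ_n$ has $n+1$, the matrix is $(n+3)\times(n+1)$, and the diagonal/second-subdiagonal support found above is precisely the bidiagonal pattern of \eqref{eq:matrixC-G}. Its top and bottom rows vanish: these are $\la Q_0^{n+2},Q_l^n\ra_\nabla$ and $\la Q_{n+2}^{n+2},Q_l^n\ra_\nabla$, which are zero by the ``in particular'' clause of Corollary~\ref{cor:Qni-Qml-G} (applicable since $n<n+2$), consistent with the factors $l^2$ and $(n-l)^2$ forcing $c_{0,0}^n=0$ and $c_{n+2,n}^n=0$. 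I expect no genuine obstacle in this argument; the only care needed is in tracking the index shift between the two different degrees and in checking that the boundary values $l=0$ and $l=n$ are consistent with the vanishing of the out-of-range coefficients $b_\cdot$ and norms $h_\cdot$.
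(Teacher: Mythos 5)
Your proof is correct and takes essentially the same approach as the paper: the diagonal form of $\Db_n$ is read off from Lemma~\ref{lem:Qoc-G} with equal degrees (only the $\d_{n,m}$ bracket, all of whose terms carry $\d_{i,l}$, survives), and the bidiagonal form of $\Cb_n$ with vanishing first and last rows comes from Corollary~\ref{cor:Qni-Qml-G} applied with degrees $n+2$ and $n$. Your additional checks at the boundary indices $l=0$ and $l=n$ merely make explicit what the paper leaves implicit.
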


\begin{proof}
The formula for $\Db_n$ follows directly from Lemma \ref{lem:Qoc-G}.  Furthermore, by
Corollary \ref{cor:Qni-Qml-G}, for $0\leqslant i\leqslant n+2$,
$$
\la Q^{n+2}_i, Q^n_l \ra_{\nabla}
  =   (l+2)(n-l)^2\, b_{l+1}(\b) \, h^{n-1}_{l} \, \d_{i,l+2} + l^2 (n-l+2)\, b_{n-l+1}(\a)\, h^{n-1}_{l-1} \, \d_{i,l},
$$
which shows that $\Cb_n$ is a bidiagonal matrix and its first and last row are zero.
\end{proof}


Now we can compute the matrix $\Bb_{n-2}$ in \eqref{eq:Q-S-Gegenbauer}.

\begin{thm}
Let $\Hb_n^\nabla: = \la \SS_n, \SS_n \ra_\nabla$. Then  $\Hb_n^\nabla$ satisfies the recursive relation
\begin{align}
 \Hb_n^{\nabla}&= \Db_n  - \Cb_{n-2} (\Hb_{n-2}^{\nabla})^{-1} \Cb_{n-2}^{\tr} \label{eq:H-rec-Geg},
 \end{align}
where the iteration is initiated by $\Hb_1^\nabla = \Ib$, the identity matrix,
and $\Hb_2^\nabla = \Db_2$.
Furthermore, for $n =1,2,\ldots$,
the matrix $\Bb_n$ in \eqref{eq:Q-S-Gegenbauer} is determined by
\begin{align}
 \Bb_n  = \Cb_{n} (\Hb_n^{\nabla})^{-1}. \label{eq:B-rec-Geg}
\end{align}
\end{thm}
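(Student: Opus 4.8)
The plan is to run the argument of \thmref{thm_3.5} for the product Laguerre case almost verbatim, replacing its one-step structure by the two-step structure forced by \eqref{eq:Q-S-Gegenbauer} and Corollary \ref{cor:Qni-Qml-G}. The structural fact I would extract first from that corollary is that, among the families $\QQ_m$ of degree $m\leqslant n+1$, the gradient form pairs $\QQ_{n+2}$ nontrivially only with $\QQ_n$; that is, $\la \QQ_{n+2}, \QQ_m^\tr \ra_\nabla = 0$ whenever $m\leqslant n+1$ and $m\neq n$. Since iterating \eqref{eq:Q-S-Gegenbauer} expresses $\SS_{n-2}$ as a linear combination of the $\QQ_m$ with $m\leqslant n-2$, this at once gives $\la \QQ_{n+2}, \SS_{n-2}^\tr \ra_\nabla = 0$, and hence, after writing $\SS_n \c= \QQ_n - \Bb_{n-2}\SS_{n-2}$, the identity $\la \QQ_{n+2}, \SS_n^\tr \ra_\nabla = \la \QQ_{n+2}, \QQ_n^\tr \ra_\nabla = \Cb_n$. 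All inner products here are taken in the gradient form $\la\cdot,\cdot\ra_\nabla$, under which additive constants are annihilated, so I would note once that the relation $\c=$ may be replaced by $=$ inside every such pairing.

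To establish \eqref{eq:B-rec-Geg}, I would begin from the orthogonality $\la \SS_{n+2}, \SS_n^\tr \ra_\nabla = 0$, valid because the two families have distinct total degrees. Expanding $\SS_{n+2} \c= \QQ_{n+2} - \Bb_n \SS_n$ produces $0 = \la \QQ_{n+2}, \SS_n^\tr \ra_\nabla - \Bb_n \la \SS_n, \SS_n^\tr \ra_\nabla = \Cb_n - \Bb_n \Hb_n^\nabla$, using the computation of the previous paragraph. As $\Hb_n^\nabla$ is symmetric and positive definite, hence invertible, this rearranges to $\Bb_n = \Cb_n (\Hb_n^\nabla)^{-1}$, which is \eqref{eq:B-rec-Geg}.

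For the recursion \eqref{eq:H-rec-Geg}, I would evaluate $\Hb_n^\nabla = \la \SS_n, \SS_n^\tr \ra_\nabla$ by replacing one factor through $\SS_n \c= \QQ_n - \Bb_{n-2}\SS_{n-2}$; orthogonality removes the term $\la \SS_{n-2}, \SS_n^\tr \ra_\nabla$, leaving $\Hb_n^\nabla = \la \QQ_n, \SS_n^\tr \ra_\nabla$. Expanding the remaining $\SS_n$ and using $\la \QQ_n, \QQ_n^\tr \ra_\nabla = \Db_n$ together with $\la \QQ_n, \SS_{n-2}^\tr \ra_\nabla = \la \QQ_n, \QQ_{n-2}^\tr \ra_\nabla = \Cb_{n-2}$ (the structural fact again, now with $n$ in place of $n+2$) yields $\Hb_n^\nabla = \Db_n - \Cb_{n-2}\Bb_{n-2}^\tr$. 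Substituting $\Bb_{n-2} = \Cb_{n-2}(\Hb_{n-2}^\nabla)^{-1}$ from \eqref{eq:B-rec-Geg} and invoking the symmetry of $\Hb_{n-2}^\nabla$ converts this into \eqref{eq:H-rec-Geg}.

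Finally, the initial data split by the parity of $n$, since the recursion descends by two. For $n=1$ one has $\QQ_1 \c= \SS_1$ with no lower term, and a direct gradient computation on $Q_0^1 = x$, $Q_1^1 = y$ gives $\Hb_1^\nabla = \Db_1 = \Ib$; for $n=2$, the only lower term $\Bb_0 \SS_0$ involves the constant $S_0^0 = 1$, which vanishes modulo constants, so $\QQ_2 \c= \SS_2$ and $\Hb_2^\nabla = \Db_2$. I expect the sole genuinely new point, relative to the Laguerre proof, to be verifying via Corollary \ref{cor:Qni-Qml-G} that exactly one off-diagonal block, namely $\Cb_{n-2}$, links consecutive levels of the same parity; once that block structure is pinned down, the linear-algebra manipulations are identical to those in \thmref{thm_3.5}.
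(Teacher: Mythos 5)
Your proposal is correct and follows essentially the same route as the paper: both expand $\la \SS_{n+2},\SS_n^\tr\ra_\nabla$ and $\la \SS_n,\SS_n^\tr\ra_\nabla$ via \eqref{eq:Q-S-Gegenbauer}, invoke orthogonality of Sobolev polynomials of distinct degrees together with the vanishing encoded in Corollary \ref{cor:Qni-Qml-G}, and conclude using the symmetry and nonsingularity of $\Hb_n^\nabla$. Your only departures are cosmetic and in fact slight improvements: you establish $\la \QQ_{n+2},\SS_n^\tr\ra_\nabla=\Cb_n$ directly from the block-vanishing structure instead of passing through the paper's intermediate identity $\la \QQ_{n+2},\QQ_n^\tr\ra_\nabla = \Bb_n\left[\Db_n - \Cb_{n-2}\Bb_{n-2}^\tr\right]$, and you verify the initializations $\Hb_1^\nabla=\Ib$ and $\Hb_2^\nabla=\Db_2$, which the paper states without proof.
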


\begin{proof} This is similar to the proof of Theorem~\ref{thm_3.5}. Using \eqref{eq:Q-S-Gegenbauer} twice we obtain
\begin{align*}
 \la  \SS_{n+2},   \SS_{n}^{\tr}\ra_\nabla  &= \la \QQ_{n+2}, \QQ_{n}^{\tr}\ra_\nabla  - \Bb_n \la \SS_{n}, \SS_{n}^{\tr}\ra_\nabla \\
& = \la  \QQ_{n+2},  \QQ_{n}^{\tr}\ra_\nabla - \Bb_n \la  \QQ_{n},  (\QQ_{n}-\Bb_{n-2}\,\SS_{n-2})^\tr \ra_\nabla\\
&= \la \QQ_{n+2}, \QQ_{n}^{\tr}\ra_\nabla - \Bb_n \la \QQ_{n},\QQ_{n}^{\tr}\ra_\nabla +  \Bb_n \la \QQ_{n}, \QQ_{n-2}^{\tr}\ra_\nabla \Bb_{n-2}^{\tr}.
\end{align*}
And from $\la \nabla \SS_{n+2}, \nabla \SS_{n}^{\tr}\ra=0$ we deduce
\begin{equation} \label{Bn-rec}
\la \QQ_{n+2},  \QQ_{n}^{\tr}\ra_\nabla = \Bb_n \left[\la \QQ_{n},  \QQ_{n}^{\tr}\ra_\nabla - \la \QQ_{n}, \QQ_{n-2}^{\tr}\ra_ \nabla \Bb_{n-2}^{\tr} \right].
\end{equation}
Next we compute $\Hb_n^{\nabla} = \la \SS_n, \SS_n^{\tr} \ra_\nabla$ by using \eqref{eq:Q-S-Gegenbauer} and the orthogonality of $\SS_n$,
\begin{align} \label{Hn-Geg-rec}
 \Hb_n^{\nabla} & = \la  \QQ_n,  \SS_n^{\tr} \ra_ \nabla = \la \QQ_n, (  \QQ_n-\Bb_{n-2} \SS_{n-2})^{\tr} \ra _\nabla \\
  & = \la\QQ_n, \QQ_n^{\tr} \ra_ \nabla - \la \QQ_n,  \QQ_{n-2}^{\tr} \ra_\nabla \Bb_{n-2}^{\tr}. \notag
\end{align}
Since $\Hb_n^\nabla$ is nonsingular, substituting the above relation into \eqref{Bn-rec} proves \eqref{eq:B-rec-Geg}.
Finally, substituting \eqref{eq:B-rec-Geg} into \eqref{Hn-Geg-rec} shows \eqref{eq:H-rec-Geg}.
\end{proof}

The previous theorem shows that $\Hb_n^\nabla$ and $\Bb_n$ can be determined iteratively.

Since $S_0^n \c= Q_0^n$ and $S_n^n \c= Q_n^n$, we only need to determine $S_k^n$ for $1 \leqslant k \leqslant n-1$. The matrix structure reflects this information, as shown in Theorem \ref{thm:Q=S} and \eqref{eq:matrixC-G}; in fact we have
$$
\Bb_{n-2}=\left[
\begin{array}{ccc} 0 & \dots & 0 \\ \hline  & & \\
& \wt{\Bb}_{n-2} & \\  & & \\ \hline 0 & \dots & 0
\end{array} \right]
\quad \hbox{and} \quad
 \Cb_{n-2} = \left[
\begin{array}{ccc} 0 & \dots & 0 \\ \hline  & & \\
& \wt \Cb_{n-2} & \\
 & & \\ \hline 0 & \dots & 0
\end{array} \right],
$$
where $\wt{\Bb}_{n-2}$ and $\wt \Cb_{n-2}$ are matrices of size $(n-1)\times (n-1)$.

We now proceed as in Section 3 to simplify the iteration process.

The matrix structure reads as
$$
\Hb_{n}^{\nabla} = \Db_n - \Cb_{n-2} \Bb_{n-2}^{\tr} \\
  = \left[
\begin{array}{ccc}
d_0^n & & \\
 & \wt{\Db}_{n} &  \\
& & d_n^{n}
\end{array}
\right] -  \left[
\begin{array}{ccc}
0 & \cdots & 0 \\
\vdots & \wt \Cb_{n-2} \wt{\Bb}_{n-2}^{\tr} & \vdots \\
0 & \cdots & 0
\end{array}
\right],
$$
which shows that the matrix $\Hb_n^\nabla$ takes the form
\begin{equation} \label{eq:htHn-G}
  \Hb_n^\nabla = \left [ \begin{matrix} d_0^n & & 0 \\
      & \wh \Hb_n^\nabla & \\   0 & & d_n^n   \end{matrix} \right]
        \quad \hbox{with} \quad \wh{\Hb}_{n}^{\nabla} = \wt{\Db}_n - \wt \Cb_{n-2} \wt{\Bb}_{n-2}^{\tr},
\end{equation}
and we only need to determine $\wh \Hb_n^\nabla$. If we write
$$
\wt \Cb_n = \left[\begin{array}{c|c|c}
  0  &  & 0 \\
  c_{2,0}^n &  & 0 \\
\vdots & \wh{\Cb}_{n} & \vdots \\
0 &  & c_{n,n}^{n}\\
0 &  & 0
\end{array}
\right] \quad \hbox{with}\quad
\wh{\Cb}_{n} =
 \left[ \begin{matrix}
c_{1,1}^n & 0 &  & \\
 0 & c_{2,2}^n &  &  \\
c_{3,1}^n & 0 &  & \\
& \ddots & \ddots & \ddots    \\
        &  &  0 & c_{n-1,n-1}^n \\
        &  &  c_{n,n-2}^n & 0   \\
     &   &  0 & c_{n+1,n-1}^n
\end{matrix}
\right],
$$
then from $\Bb_n = \Cb_{n} \left(\Hb_{n}^{\nabla}\right)^{-1}$ at \eqref{eq:B-rec-Geg} we conclude
$$
\wt{\Bb}_{n} = \wt \Cb_{n}
\left[
\begin{matrix}
(d_0^n)^{-1} & \ldots & 0 \\
 & \left(\wh{\Hb}_{n}^{\nabla}\right)^{-1} &  \\
0 & \ldots & (d_n^{n})^{-1}
\end{matrix}
\right] = \left[ \begin{array}{c|c|c}
0 &  & 0 \\
2 b_1(\b) &  & 0 \\ \vdots & \wh{\Cb}_{n} \left(\wh{\Hb}_{n}^{\nabla}\right)^{-1} & \vdots \\
0 &  & 2 b_1(\a) \\
0 &  & 0
\end{array}
\right],
$$
where we use
\begin{align*}
c_{2,0}^{n} &= 2 b_1(\b)\, n^2 h_0^{n-1}, \qquad d_0^n = n^2 h_0^{n-1}, \\
c_{n,n}^{n} &= 2 b_1(\a)\, n^2 h_{n-1}^{n-1}, \qquad d_n^n = n^2 h_{n-1}^{n-1}.
\end{align*}

Consequently, we see that $\wt \Bb_n$ is of the form
\begin{equation} \label{eq:htBn}
\wt{\Bb}_{n} = \left[ 2 b_1(\b) \eb_2 | \wh{\Bb}_{n} | 2 b_1(\a) \eb_{n} \right] \quad \hbox{with} \quad
   \wh{\Bb}_{n} = \wh{\Cb}_{n} \left(\wh{\Hb}_{n}^{\nabla}\right)^{-1},
\end{equation}
where $\eb_2$ and $\eb_{n}$ are, respectively, the second vector and the second last vector in the canonical basis
of $\RR^{n+1}$. Consequently, it follows that
$$
\wt \Cb_{n-2} \wt{\Bb}_{n-2}^{\tr}  = 4 b_1^2(\b)d_{0}^{n-2} \eb_2 \eb_2^{\tr} +  \wh{\Cb}_{n-2} \wh{\Bb}_{n-2}^{\tr} +
4 b_1^2(\a) d_{n-2}^{n-2} \eb_{n-2} \eb_{n-2}^{\tr}.
$$
We finally conclude by  \eqref{eq:htHn-G} that the matrix $\wh \Hb_n^{\nabla}$ satisfies the relation
$$
   \wh{\Hb}_{n}^{\nabla} = \wh \Db_n  - \wh \Cb_{n-2} \wh{\Bb}_{n-2}^{\tr},
$$
where $\wh \Db_n$ is the diagonal matrix
$$
\wh{\Db}_{n} = \wt \Db_n - 4 b_1^2(\b) d_{0}^{n-2} \eb_2 \eb_2^{\tr} -  4 b_1^2(\a) d_{n-2}^{n-2} \eb_{n-2} \eb_{n-2}^{\tr}.
$$
Summing up, we have proved the following proposition.

\begin{prop}
Let $\wh \QQ_n : =  (Q_1^n, \ldots, Q_{n-1}^n)$ and $\wh \SS_n : =
(S_1^n, \ldots, S_{n-1}^n)$. Then $\wh \Hb_n^\nabla = \la \wh \SS_n, \wh \SS_n^\tr \ra_\nabla$.
Furthermore, for $n =3,4,\ldots$,
\begin{equation} \label{eq:whQ=whS-G}
   \wh \QQ_n \c=  \wh \SS_n + \left[ 2 b_1(\b) \eb_2 \big \vert  \wh{\Bb}_{n-2} \big \vert 2 b_1(\a) \eb_{n-2} \right] \SS_{n-2},
\end{equation}
where the matrices $\wh \Bb_n$ of size $n \times (n-2)$ and $\wh \Hb_n^{\nabla}$ of size $(n-1)\times(n-1)$ are determined
iteratively by
$$
 \wh{\Bb}_{n} = \wh{\Cb}_{n} \big(\wh{\Hb}_{n}^{\nabla}\big)^{-1} \quad \hbox{and}\quad
 \wh{\Hb}_{n}^{\nabla} = \wh \Db_n  - \wh \Cb_{n-2} \wh{\Bb}_{n-2}^{\tr}
$$
for $n=3,4,\ldots,$ with the initial condition $\wh{\Bb}_{1} = 0$.
\end{prop}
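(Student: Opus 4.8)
The plan is to prove this in complete parallel with its Laguerre counterpart (the unnumbered Proposition of Section~3), since all the requisite algebra has already been assembled in the discussion preceding the statement; what remains is to package it into the three assertions. The organizing principle is \eqref{eq:Q0n=S0n}: because $S_0^n \c= Q_0^n$ and $S_n^n \c= Q_n^n$, the first and last Sobolev polynomials of each degree are already known, so the only genuine unknowns are $S_1^n,\dots,S_{n-1}^n$, and passing to the hatted objects simply quarantines these interior unknowns from the two known extremes. Concretely, I would first verify $\wh\Hb_n^\nabla = \la \wh\SS_n, \wh\SS_n^\tr\ra_\nabla$. By Corollary~\ref{cor:Qni-Qml-G} one has $\la Q_0^n, Q_l^m\ra_\nabla = \la Q_n^n, Q_l^m\ra_\nabla = 0$ for every $m<n$; together with $S_0^n \c= Q_0^n$ and $S_n^n \c= Q_n^n$, this forces every off-diagonal entry in the first and last rows and columns of $\Hb_n^\nabla = \la\SS_n,\SS_n^\tr\ra_\nabla$ to vanish. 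That is exactly the block-diagonal shape \eqref{eq:htHn-G}, whose central block is by construction $\la\wh\SS_n,\wh\SS_n^\tr\ra_\nabla$, yielding the identity.

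Next I would read off the vector recursion \eqref{eq:whQ=whS-G}. Striking the first and last rows from $\QQ_n \c= \SS_n + \Bb_{n-2}\SS_{n-2}$ in \eqref{eq:Q-S-Gegenbauer} turns the left side into $\wh\QQ_n$, the diagonal part into $\wh\SS_n$, and the coefficient matrix into $\wt\Bb_{n-2}$. The decomposition \eqref{eq:htBn} (with $n$ replaced by $n-2$) identifies $\wt\Bb_{n-2} = [\,2b_1(\b)\eb_2 \mid \wh\Bb_{n-2} \mid 2b_1(\a)\eb_{n-2}\,]$, which is precisely the asserted form. The rank-one boundary columns $2b_1(\b)\eb_2$ and $2b_1(\a)\eb_{n-2}$ are exactly where the corner identities $c_{2,0}^n = 2b_1(\b)\,d_0^n$ and $c_{n,n}^n = 2b_1(\a)\,d_n^n$ enter: dividing the two extreme columns of $\wt\Cb_{n-2}$ by the scalars $d_0^{n-2}$ and $d_{n-2}^{n-2}$ sitting in $(\Hb_{n-2}^\nabla)^{-1}$ collapses them onto multiples of single coordinate vectors.

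Finally I would obtain the iteration. Substituting the block form \eqref{eq:htHn-G} into $\Bb_n = \Cb_n(\Hb_n^\nabla)^{-1}$ from \eqref{eq:B-rec-Geg} and restricting to the central block gives $\wh\Bb_n = \wh\Cb_n(\wh\Hb_n^\nabla)^{-1}$; separating the two rank-one pieces of $\wt\Cb_{n-2}\wt\Bb_{n-2}^\tr$ and carrying them into the recursion \eqref{eq:H-rec-Geg} for $\Hb_n^\nabla$, then extracting its central block, produces $\wh\Hb_n^\nabla = \wh\Db_n - \wh\Cb_{n-2}\wh\Bb_{n-2}^\tr$ with the corrected diagonal $\wh\Db_n = \wt\Db_n - 4b_1^2(\b)\,d_0^{n-2}\,\eb_2\eb_2^\tr - 4b_1^2(\a)\,d_{n-2}^{n-2}\,\eb_{n-2}\eb_{n-2}^\tr$. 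Setting $\wh\Bb_1=0$ makes the rank-one correction vanish at the base, so that $\wh\Hb_3^\nabla=\wh\Db_3$ and the recursion is self-starting.

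I expect the only real obstacle to be the index bookkeeping forced by the shift-by-two structure special to the symmetric self-coherent (Gegenbauer) case. Unlike the genuinely bidiagonal Laguerre matrices, here the nonzero entries of $\wt\Cb_n$ sit on the positions $(l,l)$ and $(l+2,l)$, so one must check that the columns removed by the hat operation are those producing $\eb_2$ and $\eb_{n-2}$ (rather than $\eb_1,\eb_n$ as in Section~3), and that the two rank-one corrections deposit onto the $\eb_2\eb_2^\tr$ and $\eb_{n-2}\eb_{n-2}^\tr$ slots of $\wh\Db_n$. Once this alignment is confirmed against the explicit entries $c_{l,l}^n,\ c_{l+2,l}^n$ and $d_j^n$, the remaining manipulations are the same linear-algebraic identities already established for the Laguerre weight.
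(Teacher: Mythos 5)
Your proposal is correct and follows essentially the same route as the paper, whose ``proof'' is precisely the discussion preceding the proposition: extract the central blocks of the recursions \eqref{eq:H-rec-Geg}--\eqref{eq:B-rec-Geg}, using the vanishing first and last rows of $\Bb_{n-2}$ and $\Cb_{n-2}$ together with the corner identities $c_{2,0}^n = 2b_1(\b)\,d_0^n$ and $c_{n,n}^n = 2b_1(\a)\,d_n^n$ to collapse the extreme columns onto $2b_1(\b)\eb_2$ and $2b_1(\a)\eb_n$. The only cosmetic difference is that you establish the block-diagonal shape of $\Hb_n^\nabla=\la \SS_n,\SS_n^\tr\ra_\nabla$ directly from Corollary \ref{cor:Qni-Qml-G} and $S_0^n \c= Q_0^n$, $S_n^n \c= Q_n^n$, whereas the paper reads it off the matrix identity $\Hb_n^\nabla = \Db_n - \Cb_{n-2}\Bb_{n-2}^\tr$; these rest on the same underlying facts.
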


\begin{exam}
In the case of $\a = \b =1$ we have $b_1(1) = -\frac{1}{8}$, and the iterative algorithm gives

\begin{align*}
\wh \Bb_2 &= -\f 1 8\left[\begin{matrix}  1 \\ 0 \\ 1 \end{matrix}\right], \quad  \wh \Hb_2 =\left[\begin{matrix} \frac{1}{2} \end{matrix}\right], \\
\wh \Bb_3 &= -\frac{1}{20} \left[\begin{matrix} 1 & 0 \\ 0 & 4 \\ 4 & 0 \\
    0 & 1  \end{matrix}\right], \quad
\wh \Hb_3 = \frac{5}{16}\left[\begin{matrix} 1 & 0 \\ 0 & 1 \end{matrix}\right] \\
\wh \Bb_4 &= -\frac{1}{880}\left[\begin{matrix} 21 & 0 & 1 \\ 0 & 110 & 0 \\ 198 & 0 & 198 \cr
    0 & 110 & 0 \\ 1 & 0 & 21 \end{matrix}\right], \quad 
\wh \Hb_4 = \frac{1}{128}\left[\begin{matrix} 21 & 0 & -1 \\ 0 & 16 & 0 \\ -1 & 0 & 21 \end{matrix}\right].
\end{align*}
\end{exam}

Once the matrices $\wh \Bb_n$ are determined, the relation \eqref{eq:whQ=whS-G} can be used to
determine the Sobolev orthogonal polynomials $\SS_n$ iteratively, since
$$
   \wh \SS_n \c=  \wh \QQ_n - Q_0^{n-2} 2 b_1(\b) \eb_2 -   Q_{n-2}^{n-2} 2 b_1(\a) \eb_{n-2} - \wh{\Bb}_{n-2} \wh \SS_{n-2},
$$
where we have used $S_0^{n} \c= Q_0^{n}$ and $S_{n}^{n} \c= Q_{n}^{n}$.


\begin{exam}
For the case of $\a=\b=1$, the monic Gegenbauer--Sobolev orthogonal polynomials satisfy the relation
$$
   S_{n-k}^n (x,y) = S_k^n(y,x), \qquad 0 \leqslant k \leqslant n.
$$
The following are these polynomials in lower degrees: 
\begin{align*}
  &S_0^1(x,y) = x\\
  &S_0^2(x,y) = x^2, \quad  S_1^2(x,y) = x y,  \\
  &S_0^3(x,y) = x (x^2-\frac{3}{4}), \quad S_1^3(x,y) = (x^2 - \frac{1}{4}) y, \\
  &S_0^4(x,y) = x^2(x^2-1), \quad S_1^4(x,y) = x (x^2  - \frac{5}{8}) y, \quad S_2^4(x,y) = x^2 y^2 - 
  \frac{1}{4} x^2 - \frac{1}{4} y^2.   
\end{align*}
\end{exam}

\begin{rem} In contrast to the Laguerre case with $\a = \b = 0$, we need the \textit{modulo constant}, or
$\c=$, in the Theorem \ref{thm:Q=S} for the Gegenbauer case. Note, however, that this is not a real limitation,
since our main goal is to construct a basis for $\CV_n^2(S)$, for which the additive constant does not 
matter, as shown in Theorem \ref{sobolev-basis}. 
\end{rem}

\end{document}